\newtheorem{theorem}{Theorem}
\newtheorem{definition}{Definition}
\newtheorem{proposition}{Proposition}
\newtheorem{example}{Example}
\newtheorem{remark}{Remark}
\newtheorem{lemma}{Lemma}
\title{A decision functional for \\multitime controllability}
\author{Cristian Ghiu and Constantin Udri\c ste}
\date{}
\begin{document}

\maketitle

\begin{center}
University Politehnica of Bucharest,
Faculty of Applied Sciences, Department of Mathematics II,
Splaiul Independen\c tei 313, 060042 Bucharest,
Romania, e-mail: crisghiu@yahoo.com
\end{center}

\begin{center}
University Politehnica of Bucharest, Faculty of Applied Sciences,
Department of Mathematics-Informatics I,
Splaiul Independen\c tei 313, 060042 Bucharest, Romania,
e-mail: udriste@mathem.pub.ro, anet.udri@yahoo.com
\end{center}

\begin{abstract}
This paper investigates the multitime linear normal PDE systems. We study especially
the controllability of such systems, obtaining complementary results
to those in our recent papers. Here the multitime controllability original
results are formulated using the
$\gamma$ - gramian matrix, the Im - gramian space and a controllability functional.
There are given also some original examples which illustrate and
round the theoretical results.
\end{abstract}

Keywords: multitime controllability, duality, controllability functional,
controllability space, Im - gramian space.


\section{Introduction}

This article studies the controllability of
multitime linear PDE systems using new ingredients.
Our results in this direction are complementary to those contained in the papers
\cite{8}, \cite{9}, \cite{1} -- \cite{3}, \cite{6}, \cite{7}, \cite{20}. The
multitime optimal control, and especially the multitime maximum principle,
was developed in \cite{4}, \cite{5}, \cite{10} -- \cite{18}, \cite{19}.

We shall introduce and study new concepts, as for example:

-- the {\it Im - gramian space}; it is a generalization of the controllability gramian image
which was defined in the papers \cite{8}, \cite{3} only in certain supplementary conditions
(the relations (\ref{II6}) from this paper),

-- the {\it controllability $\gamma$ - functional} and the {\it controllability functional};
they appear mixing our ideas with those of \cite{20}, where a similar
single-time functional is presented.

The basic original results of this paper are the Theorems \ref{tteorema6} (of Section 4),
\ref{tteorema7} (of Section 5) and \ref{tteorema8} (of Section 6). The Theorem \ref{tteorema6} gives necessary conditions of multitime controllability, expressed by the {\it Im - gramian space}.
The Theorem \ref{tteorema7} studies necessary conditions of multitime controllability, expressed
by the {\it controllability $\gamma$ - functional}. The Theorem \ref{tteorema8} contains necessary and sufficient conditions of multitime controllability, expressed by
the {\it controllability functional}.

\section{Preliminary results}

\subsection{Controllability of multitime linear PDE systems}

Let $D \subseteq \mathbb{R}^m$ be an open and convex
subset. We consider the evolution PDE system
\begin{equation}
\label{II3}
\frac{\partial x}{\partial t^{\alpha}}
=
M_{\alpha}(t)x+N_{\alpha}(t)u_{\alpha}(t),
\quad
\forall \alpha=\overline{1,m},
\end{equation}
where $t=(t^1,\ldots,t^m)\in \mathbb{R}^m$, called {\it multitime}, and
$x=(x^1,\ldots,x^n)^{\top}: D \to \mathbb{R}^n=\mathcal{M}_{n,1}(\mathbb{R})$. Also
$M_{\alpha}:D \to \mathcal{M}_{n}(\mathbb{R})$ are $\mathcal{C}^1$
quadratic matrix functions, $N_{\alpha}:D \to \mathcal{M}_{n,k}(\mathbb{R})$
are $\mathcal{C}^1$ rectangular matrix functions and
$u_{\alpha}:D \to\mathbb{R}^k=\mathcal{M}_{k,1}(\mathbb{R})$ are
$\mathcal{C}^1$ {\it vector control functions},
all indexed after $\alpha=\overline{1,m}$.

The PDE system (\ref{II3}) is called {\it completely integrable} if
$\forall (t_0,x_0)\in D \times \mathbb{R}^n$, there
exists an open set $D_0 \subseteq D \subseteq \mathbb{R}^m$, with
$t_0\in D_0$ and $\exists x:D_0\to \mathbb{R}^n$,
$x(\cdot)$ differentiable, such that $x(\cdot)$ verifies
equations (\ref{II3}) on $D_0$ and $x(t_0)=x_0$. In this
case $x(\cdot)$ will be called a {\it solution}
for Cauchy problem $\{ (\ref{II3}), \, x(t_0)=x_0 \}$.

The system $(\ref{II3})$ is
completely integrable if and only if the
following relations 
\begin{equation}
\label{II4}
\begin{split}
\frac{\partial M_{\alpha}}{\partial t^{\beta}}
+
M_{\alpha}(t)M_{\beta}(t)
=
\frac{\partial M_{\beta}}{\partial t^{\alpha}}
+
M_{\beta}(t)M_{\alpha}(t),
\end{split}
\end{equation}
\begin{equation}
\label{II5}
\begin{split}
& M_{\alpha}(t)N_{\beta}(t)u_{\beta}(t)
+
\frac{\partial N_{\alpha}}{\partial t^{\beta}}
u_{\alpha}(t)+
N_{\alpha}(t)
\frac{\partial u_{\alpha}}{\partial t^{\beta}} \\
&=
M_{\beta}(t)N_{\alpha}(t)u_{\alpha}(t)
+
\frac{\partial N_{\beta}}{\partial t^{\alpha}}
u_{\beta}(t)+
N_{\beta}(t)
\frac{\partial u_{\beta}}{\partial t^{\alpha}},
\end{split}
\end{equation}
hold, $\forall t\in D, \,\, \forall \alpha,\beta=\overline{1,m}$.
In these conditions, any solution $x(\cdot)$ will be
a $\mathcal{C}^2$ function and can be uniquely
extended to a global solution
$\left ( \displaystyle x: D \to \mathbb{R}^n \right )$. If
two solutions coincide at a point, then they will
coincide on whole set $D$.

Thereafter, by a solution we mean a global solution. In the papers
\cite{8}, \cite{3}, it was shown that if the relations (\ref{II4}), (\ref{II5}) are true,
then the solution of the Cauchy problem $\{ (\ref{II3}), \, x(t_0)=x_0 \}$ is

\begin{equation}
\label{solCauchy}
x(t)=
\chi(t,t_0)x_0
+
\int\limits_{\gamma_{t_0,t}}^{}
\chi(t,s)N_{\alpha}(s)
u_{\alpha}(s)
\mbox{d}s^{\alpha},
\quad
\forall t \in D,
\end{equation}
where $\gamma_{t_0,t}$ is a piecewise $\mathcal{C}^1$ curve included in $D$,
traversed from the multitime $t_0$ to the multitime $t$ and $\chi(t,s)$
is the fundamental matrix associated to the PDE system, i.e., the matrix solution of
the Cauchy problem (see \cite{8})
\begin{equation*}
\frac{\partial \chi}{\partial t^{\alpha}}
(t,s)
=
M_{\alpha}(t)\chi(t,s), \quad\chi(s,s)=I_n,\quad \forall \alpha=\overline{1,m}.
\end{equation*}

\begin{definition}
\label{ddefinitia1}
\rm
Suppose that the matrix functions
$M_\alpha(\cdot)$ verify the relations $(\ref{II4})$,
$\forall t\in D, \,\, \forall \alpha,\beta=\overline{1,m}$.
The vector space
$$
\mathcal{U}= \Bigm \{u=(u_{\alpha})_{\alpha=\overline{1,m}}
\Bigm |
u_{\alpha}:D \to \mathbb{R}^k=\mathcal{M}_{k,1}(\mathbb{R}),
\mbox{of class}\,\, \mathcal{C}^1, \forall \alpha=\overline{1,m}
$$
$$
\mbox{ and which verify the relations $(\ref{II5})$ for all } \alpha, \beta
\Bigm \}
$$
is called the {\it control space},
associated to the system $(\ref{II3})$.
If $u\in \mathcal{U}$, we say that $u$ is a {\it control}.
\end{definition}

So, if the matrices
$M_{\alpha}(\cdot)$ verify the relations $(\ref{II4})$,
$\forall t \in D$ and $\forall \alpha, \beta=\overline{1,m}$,
then the system (\ref{II3}) is completely integrable if and
only if $(u_{\alpha})_{\alpha=\overline{1,m}}$ is
a control function.

\begin{definition}
\label{ddefinitia2}
\rm
Let us consider the PDE system $(\ref{II3})$, with the matrix functions
$M_\alpha(\cdot)$ verifying the relations $(\ref{II4})$.


$a)$ The phase $(t,x)$ is called {\it controllable}
if there exists a point $s\in D$, with $s^{\alpha}>t^{\alpha}$, $\forall \alpha$,
and there exists a control $u(\cdot)$ which transfers the phase $(t,x)$ into the phase $(s,0)$.

$b)$ Let $t_0,t \in D$, with $t_0^{\alpha}<t^{\alpha}$, $\forall \alpha$.
The PDE system $(\ref{II3})$ is called {\it completely controllable} from $t_0$ to $t$
if for any point $x \in \mathbb{R}^n$, the phase $(t_0,x)$ transfers into the phase $(t,0)$, i.e., for any point $x$ the phase $(t_0,x)$ is controllable with the same $t$.

\end{definition}

Let us consider the PDE system $(\ref{II3})$, with the matrix functions
$M_\alpha(\cdot)$ verifying the relations $(\ref{II4})$. Taking $t_0,t\in D$,
we consider the set
$$
\mathcal{V}(t_0,t):=
\Bigg \{
\int\limits_{\gamma_{t_0,t}}^{}
\chi(t_0,s)N_{\alpha}(s)
u_{\alpha}(s)
\mbox{d}s^{\alpha}
\,
\Big|
\,
(u_{\alpha})_{\alpha=\overline{1,m}}
\mbox{\,\, is a control }
\Bigg \}.
$$

If $(u_{\alpha})_{\alpha=\overline{1,m}}$\, is
a control, then the curvilinear integral
$$\displaystyle
\int\limits_{\gamma_{t_0,t}}^{}
\chi(t_0,s)N_{\alpha}(s)
u_{\alpha}(s)
\mbox{d}s^{\alpha}
$$
is path independent, so $\mathcal{V}(t_0,t)$
does not depend on the curve $\gamma_{t_0,t}$,
which joins $t_0$ to $t$, but it depends on the multitimes $t_0$ and $t$.

One remarks immediately that the set $\mathcal{V}(t_0,t)$ is a vector subspace
of $\mathbb{R}^n$.

\begin{definition}
\label{ddefinitia3}
\rm
The space $\mathcal{V}(t_0,t)$ is called the {\it controllability space}.
\end{definition}

\begin{theorem}
\label{tteorema5}
{\it Let us consider the PDE system $(\ref{II3})$, with the matrix functions
$M_\alpha(\cdot)$ verifying the relations} $(\ref{II4})$.

$i)$ {\it The control $\displaystyle (u_{\alpha})_{\alpha =\overline{1,m}}$
transfers the phase $(t_0,x_0)$ into the phase $(t,y)$ if and only if}
$\quad
\displaystyle
\chi(t_0,t)y-x_0
=
\int\limits_{\gamma_{t_0,t}}^{}
\chi(t_0,s)N_{\alpha}(s)
u_{\alpha}(s)
\mbox{d}s^{\alpha}.
$

$ii)$ {\it The phase $(t_0,x_0)$ transfers into the phase $(t,y)$ if and only if}
$$
x_0-\chi(t_0,t)y\in \mathcal{V}(t_0,t).
$$

$iii)$ {\it The phase $(t_0,x_0)$ is controllable if and only if
$\exists t\in D$, with $t^{\alpha}> t^{\alpha}_0$,
$\forall \alpha$ such that}
$ x_0\in \mathcal{V}(t_0,t)$.

$iv)$ {\it Let $t_0, t \in D$, with $t_0^{\alpha}<t^{\alpha}$, $\forall \alpha$.
The PDE system is completely controllable
from the multitime $t_0$ into the multitime $t$ if and only if}
\,\,$\mathcal{V}(t_0,t)=\mathbb{R}^n$.

\end{theorem}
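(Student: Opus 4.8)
The plan is to base everything on the explicit solution formula $(\ref{solCauchy})$ together with the standard algebraic properties of the fundamental matrix, and then to read off parts $(ii)$--$(iv)$ as formal corollaries of $(i)$. I would first record the identities $\chi(t,t)=I_n$ and the cocycle relation $\chi(a,b)\,\chi(b,c)=\chi(a,c)$ for all $a,b,c\in D$; in particular $\chi(t_0,t)\,\chi(t,t_0)=I_n$, so $\chi(t,t_0)$ is invertible with inverse $\chi(t_0,t)$. These follow from the complete integrability hypothesis $(\ref{II4})$ and the uniqueness of the global solution noted above (both sides of the cocycle relation, viewed as functions of $b$ with $a,c$ fixed, solve the same completely integrable linear system and agree at $b=c$), or they may simply be quoted from \cite{8}.

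For $(i)$: a control $(u_{\alpha})_{\alpha=\overline{1,m}}$ transfers $(t_0,x_0)$ into $(t,y)$ precisely when the unique solution of the Cauchy problem $\{(\ref{II3}),\ x(t_0)=x_0\}$ satisfies $x(t)=y$; inserting $(\ref{solCauchy})$ this reads $\chi(t,t_0)x_0+\int_{\gamma_{t_0,t}}\chi(t,s)N_{\alpha}(s)u_{\alpha}(s)\,\mathrm{d}s^{\alpha}=y$. Left-multiplying by $\chi(t_0,t)$ — which may be carried inside the curvilinear integral, since it does not depend on the integration variable $s$ — and using $\chi(t_0,t)\chi(t,t_0)=I_n$ and $\chi(t_0,t)\chi(t,s)=\chi(t_0,s)$ converts this into $x_0+\int_{\gamma_{t_0,t}}\chi(t_0,s)N_{\alpha}(s)u_{\alpha}(s)\,\mathrm{d}s^{\alpha}=\chi(t_0,t)y$, i.e. the asserted equality; conversely, left-multiplying that equality by $\chi(t,t_0)$ returns $x(t)=y$. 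The path-independence of the integral, already recorded, makes the choice of $\gamma_{t_0,t}$ irrelevant throughout.

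For $(ii)$, observe that as $(u_{\alpha})$ runs over all controls the right-hand side of the equality in $(i)$ runs exactly over $\mathcal{V}(t_0,t)$ by definition; hence $(t_0,x_0)$ transfers into $(t,y)$ iff $\chi(t_0,t)y-x_0\in\mathcal{V}(t_0,t)$, and since $\mathcal{V}(t_0,t)$ is a vector subspace this is the same as $x_0-\chi(t_0,t)y\in\mathcal{V}(t_0,t)$. Part $(iii)$ is the case $y=0$ of $(ii)$ combined with Definition \ref{ddefinitia2}$\,a)$: $(t_0,x_0)$ is controllable iff for some $t\in D$ with $t^{\alpha}>t_0^{\alpha}$ $(\forall\alpha)$ a control sends it to $(t,0)$, i.e. iff some such $t$ satisfies $x_0\in\mathcal{V}(t_0,t)$. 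Part $(iv)$ is likewise the case $y=0$: complete controllability from $t_0$ to $t$ means that $(t_0,x)$ transfers into $(t,0)$ for every $x\in\mathbb{R}^n$, i.e. $x\in\mathcal{V}(t_0,t)$ for every $x\in\mathbb{R}^n$, i.e. $\mathcal{V}(t_0,t)=\mathbb{R}^n$ (the inclusion $\mathcal{V}(t_0,t)\subseteq\mathbb{R}^n$ being automatic). The only step that is not pure bookkeeping is the verification of the fundamental-matrix identities, which I expect to be the main — though fairly mild — obstacle; it is settled by the uniqueness argument sketched above or by invoking \cite{8}.
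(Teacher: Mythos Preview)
Your proposal is correct and follows essentially the same route as the paper's own proof: part $(i)$ is obtained from the solution formula $(\ref{solCauchy})$ together with the standard identities for the fundamental matrix $\chi$, and parts $(ii)$--$(iv)$ are read off from $(i)$ via the definition of $\mathcal{V}(t_0,t)$ and Definition~\ref{ddefinitia2} with $y=0$. The only difference is that the paper states these steps tersely, whereas you have spelled out the cocycle relation, the left-multiplication by $\chi(t_0,t)$, and the subspace symmetry $z\in\mathcal{V}\Leftrightarrow -z\in\mathcal{V}$ explicitly; the logical skeleton is the same.
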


\begin{proof}
The first statement $i)$ is a consequence of the formula (\ref{solCauchy}) and
the properties of the fundamental matrix. The second statement $ii)$ follows
from $i)$ and the definition of $\mathcal{V}(t_0,t)$. From $ii)$ (taking $y=0$) and from the Definition \ref{ddefinitia2},
one obtains $iii)$ and $iv)$.
\end{proof}

\begin{proposition}
\label{ppropozitia1}
\rm{(see \cite{8}, \cite{3})} {\it Let us suppose that the matrices
$(M_{\alpha}(\cdot))_{\alpha = \overline{1,m}}$ verify the relations}
$(\ref{II4})$, $\forall t\in D, \,\, \forall \alpha,\beta=\overline{1,m}$.
{\it We fix $t_0 \in D$. For each
$v \in \mathbb{R}^n$ and $\alpha=\overline{1,m}$,
we introduce the functions}
$$
u_{\alpha,v}: D \to \mathbb{R}^k,
\quad
u_{\alpha,v}(s)
=
N_{\alpha}^\top(s)\chi(t_0,s)^\top v,
\quad
\forall
s \in D.
$$

{\it The following statements are equivalent}

$i)$ {\it For any $v \in \mathbb{R}^n$,
$(u_{\alpha,v})_{\alpha=\overline{1,m}}$ is a control for the PDE system} $(\ref{II3})$.

$ii)$ {\it For any $\alpha, \beta =\overline{1,m}$,
the following relations are satisfied on the set $D$:}
\begin{equation}
\label{II6}
\begin{split}
M_{\alpha}N_{\beta}
N_{\beta}^\top
+
\frac{\partial N_{\alpha}}{\partial s^{\beta}}
N_{\alpha}^\top
+
N_{\alpha}
\frac{\partial N_{\alpha}^\top}{\partial s^{\beta}}
+
N_{\beta}N_{\beta}^\top
M^\top_{\alpha}
\\
=
M_{\beta}N_{\alpha}
N_{\alpha}^\top
+
\frac{\partial N_{\beta}}{\partial s^{\alpha}}
N_{\beta}^\top
+
N_{\beta}
\frac{\partial N_{\beta}^\top}{\partial s^{\alpha}}
+
N_{\alpha}N_{\alpha}^\top
M^\top_{\beta}.
\end{split}
\end{equation}

$iii)$ {\it The curvilinear integral
$\quad
\displaystyle
\int\limits_{\gamma}^{}
\chi(t_0,s)N_{\alpha}(s)
N_{\alpha}^\top(s)\chi(t_0,s)^\top
\mbox{d}s^{\alpha}
$
\, is path independent on $D$}.
\end{proposition}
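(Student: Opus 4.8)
The plan is to establish the chain $i)\Leftrightarrow ii)\Leftrightarrow iii)$ by direct computation, the common bridge in both equivalences being the differentiation rule for the fundamental matrix in its second argument. From the defining Cauchy problem for $\chi$ together with the composition rule $\chi(t_0,s)\chi(s,t_0)=I_n$ one obtains, differentiating with respect to $s^{\beta}$ and using $\partial_{s^{\beta}}\chi(s,t_0)=M_{\beta}(s)\chi(s,t_0)$,
\[
\frac{\partial}{\partial s^{\beta}}\chi(t_0,s)=-\,\chi(t_0,s)M_{\beta}(s),
\qquad
\frac{\partial}{\partial s^{\beta}}\chi(t_0,s)^{\top}=-\,M_{\beta}(s)^{\top}\chi(t_0,s)^{\top},
\]
the second equality being the transpose of the first. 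I also note that each $u_{\alpha,v}$ is automatically of class $\mathcal{C}^1$, being the product of the $\mathcal{C}^1$ matrix $N_{\alpha}$ and the $\mathcal{C}^1$ matrix $\chi(t_0,\cdot)^{\top}$; hence the statement that $(u_{\alpha,v})_{\alpha}$ is a control for $(\ref{II3})$ amounts exactly to the relations $(\ref{II5})$ holding for it.

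For $i)\Leftrightarrow ii)$ I would substitute $u_{\alpha,v}(s)=N_{\alpha}^{\top}(s)\chi(t_0,s)^{\top}v$ into $(\ref{II5})$, compute $\partial_{s^{\beta}}u_{\alpha,v}$ by the product rule and the identity above, and collect all resulting terms in the form $\bigl[\cdots\bigr]\chi(t_0,s)^{\top}v$ with an $n\times n$ matrix coefficient that depends only on the $M$'s, the $N$'s and their first derivatives. Since $\chi(t_0,s)^{\top}$ is invertible, $\chi(t_0,s)^{\top}v$ runs over all of $\mathbb{R}^{n}$ as $v$ does, so $(\ref{II5})$ holds for every $v\in\mathbb{R}^{n}$ if and only if this matrix coefficient vanishes identically on $D$; rearranging the terms, that identity is precisely $(\ref{II6})$.

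For $ii)\Leftrightarrow iii)$ I would exploit that $D$, being open and convex, is simply connected, so the differential form with components $\omega_{\alpha}=\chi(t_0,s)N_{\alpha}(s)N_{\alpha}^{\top}(s)\chi(t_0,s)^{\top}$ --- which are of class $\mathcal{C}^1$ --- is path independent on $D$ if and only if it is closed, i.e.\ $\partial_{s^{\beta}}\omega_{\alpha}=\partial_{s^{\alpha}}\omega_{\beta}$ for all $\alpha,\beta$. Differentiating $\omega_{\alpha}$ by the product rule and the same identity lets me factor $\chi(t_0,s)$ out on the left and $\chi(t_0,s)^{\top}$ out on the right of both $\partial_{s^{\beta}}\omega_{\alpha}$ and $\partial_{s^{\alpha}}\omega_{\beta}$; invertibility of $\chi$ then reduces the closedness condition to the equality of the two inner matrix brackets, which after the same rearrangement as before is again exactly $(\ref{II6})$.

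The only genuine effort is bookkeeping: keeping the minus sign in $\partial_{s}\chi=-\chi M$ straight and checking, term by term (four terms on each side), that the rearranged matrix identities coincide with $(\ref{II6})$. I anticipate no conceptual obstacle --- both the passage ``valid for all $v$ $\Rightarrow$ matrix identity'' and the Poincar\'e-lemma step (closed $\Leftrightarrow$ path independent on a simply connected open set) are routine once the fundamental-matrix calculus above is in hand.
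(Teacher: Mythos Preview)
The paper does not include its own proof of this proposition; it is stated with the attribution ``(see \cite{8}, \cite{3})'' and used without further argument. Your proposed proof is correct and is the natural one: substituting $u_{\alpha,v}$ into $(\ref{II5})$ and using $\partial_{s^{\beta}}\chi(t_0,s)=-\chi(t_0,s)M_{\beta}(s)$ produces, after the rearrangement you describe, exactly $(\ref{II6})$ multiplied on the right by $\chi(t_0,s)^{\top}v$, and the invertibility of $\chi$ plus the arbitrariness of $v$ gives $i)\Leftrightarrow ii)$; the closedness computation for $\omega_{\alpha}=\chi N_{\alpha}N_{\alpha}^{\top}\chi^{\top}$ factors as $\chi[\cdots]\chi^{\top}$ and yields the same identity, while convexity of $D$ supplies the simply-connectedness needed for closed $\Leftrightarrow$ path independent, giving $ii)\Leftrightarrow iii)$. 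There is nothing to add beyond the bookkeeping you already flagged.
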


\subsection{Quadratic affine forms on Hilbert spaces}

In this Section we shall reformulate two well-known Theorems regarding
the quadratic affine forms on a Hilbert space. These Theorems will be used in
the next Sections to obtain new results concerning the controllability
of multitime PDE systems.

\begin{theorem}
\label{tteorema1}
{\it Let $\mathcal{H}$ be a real Hilbert space and $T: \mathcal{H} \to \mathcal{H}$
be a linear, continuous, self-adjoint, positive semidefinite operator. For each $w \in \mathcal{H}$,
we consider the quadratic affine form}
\begin{equation*}
F_w: \mathcal{H} \to \mathbb{R},
\quad
F_w(v)=
\langle
T(v),v
\rangle
-
2
\langle
w,v
\rangle,
\quad
\forall
v \in \mathcal{H}.
\end{equation*}

$i)$ {\it If $v_0 \in \mathcal{H}$ is a local minimum point of $F_w$, then $T(v_0)=w$}.

$ii)$ {\it If $v_0 \in \mathcal{H}$ satisfies $T(v_0)=w$, then $v_0$ is a global minimum point of $F_w$}.

$iii)$ {\it If there exists a local maximum point $v_0 \in \mathcal{H}$
for $F_w$, then $w=0$ and $T(v)=0$, $\forall v \in \mathcal{H}$,
i.e., the function $F_w$ is identically zero (and, evidently, in this case, any point is a global minimum point)}.

$iv)$ {\it $v_0$ is a local extremum point of $F_w$ if and only if $v_0$ is a global minimum point for $F_w$}.
\end{theorem}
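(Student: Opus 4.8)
The plan is to reduce all four statements to a single algebraic identity and then read them off by one-variable arguments. First I would expand, for fixed $v_0,h\in\mathcal{H}$ and $\lambda\in\mathbb{R}$,
\begin{equation*}
F_w(v_0+\lambda h)=F_w(v_0)+2\lambda\,\langle T(v_0)-w,\,h\rangle+\lambda^2\,\langle T(h),h\rangle,
\end{equation*}
where the cross term is simplified using that $T$ is self-adjoint, so $\langle T(h),v_0\rangle=\langle T(v_0),h\rangle$. This turns each claim into a statement about the scalar quadratic $\varphi(\lambda)=a\lambda^2+b\lambda+c$ with $a=\langle T(h),h\rangle\ge 0$ (here positive semidefiniteness enters), $b=2\langle T(v_0)-w,h\rangle$ and $c=F_w(v_0)$.

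For $i)$: if $v_0$ is a local minimum, then for each $h$ the map $\lambda\mapsto F_w(v_0+\lambda h)$ has a local minimum at $\lambda=0$, so its right derivative there is $\ge 0$, giving $\langle T(v_0)-w,h\rangle\ge 0$; replacing $h$ by $-h$ yields $\langle T(v_0)-w,h\rangle=0$ for all $h$, i.e. $T(v_0)=w$. For $ii)$: if $T(v_0)=w$ the identity collapses to $F_w(v_0+h)=F_w(v_0)+\langle T(h),h\rangle\ge F_w(v_0)$, so $v_0$ is a global minimum. Together $i)$ and $ii)$ already show that every local minimum is global, which is half of $iv)$.

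For $iii)$: if $v_0$ is a local maximum, the first-derivative argument applied with both signs of $\lambda$ again forces $T(v_0)=w$, hence $F_w(v_0+\lambda h)=F_w(v_0)+\lambda^2\langle T(h),h\rangle$; a local maximum at $\lambda=0$ forces $\langle T(h),h\rangle\le 0$, which with $\ge 0$ gives $\langle T(h),h\rangle=0$ for every $h$. The one genuinely structural step — and the place I expect to have to be careful — is passing from $\langle T(h),h\rangle\equiv 0$ to $T\equiv 0$: this fails for general operators on a real Hilbert space (e.g. a rotation), but here it follows by polarization together with self-adjointness, namely $0=\langle T(h+g),h+g\rangle=2\langle T(h),g\rangle$ for all $h,g$, whence $T=0$ and then $w=T(v_0)=0$, so $F_w\equiv 0$ and every point is trivially a global minimum.

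Finally $iv)$ is bookkeeping: a global minimum is a local extremum; conversely a local extremum is either a local minimum, hence global by $i)$ and $ii)$, or a local maximum, in which case $iii)$ makes $F_w$ constant so $v_0$ is again a global minimum. I do not anticipate analytic difficulty here: infinite-dimensionality of $\mathcal{H}$ plays no role, continuity of $T$ is used only to make $F_w$ continuous, and the entire argument lives on the affine lines through $v_0$.
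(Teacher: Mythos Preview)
Your proof is correct. The paper, however, does not actually prove this theorem: it is stated in Section~2.2 as one of ``two well-known Theorems regarding the quadratic affine forms on a Hilbert space'' and is used as a black box; no argument is given. So there is nothing to compare against.

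For what it is worth, your approach --- reducing everything to the one-variable expansion $F_w(v_0+\lambda h)=F_w(v_0)+2\lambda\langle T(v_0)-w,h\rangle+\lambda^2\langle T(h),h\rangle$ and then reading off each item --- is the standard and cleanest way to do it. The one place that deserves a remark, and you flagged it yourself, is the passage from $\langle T(h),h\rangle=0$ for all $h$ to $T=0$: your polarization argument using self-adjointness is exactly right, and it is indeed the point where the hypothesis ``self-adjoint'' (rather than merely ``positive semidefinite in the sense $\langle T(h),h\rangle\ge 0$'') is genuinely needed on a \emph{real} Hilbert space. One small comment on presentation: in $i)$ and $iii)$ you could simply say that $\lambda\mapsto F_w(v_0+\lambda h)$ is a polynomial with a local extremum at $0$, hence its derivative there vanishes, which gives $\langle T(v_0)-w,h\rangle=0$ directly without the $\pm h$ trick; but what you wrote is perfectly fine.
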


\begin{theorem}
\label{tteorema3} {\it One consider $\mathcal{H}$
as a real Hilbert space of finite dimension $n$. Let
$T: \mathcal{H} \to \mathcal{H}$ be an self-adjoint, positive semidefinite, linear operator.
Let $A$ be the matrix of $T$, associated in an arbitrary basis.

For each $w \in \mathcal{H}$,
we consider the function}
\begin{equation*}
F_w: \mathcal{H} \to \mathbb{R},
\quad
F_w(v)=
\langle
T(v),v
\rangle
-
2
\langle
w,v
\rangle,
\quad
\forall
v \in \mathcal{H}.
\end{equation*}
{\it Then the following statements are equivalent:

$i)$ $T$ is positive definite.

$ii)$ $T$ is bijective (equivalent to $\mbox{\rm rank}\, A=n$).

$iii)$ For any $w \in \mathcal{H}$, the function $F_w$ has at most a minimum point.

$iv)$ For each $w \in \mathcal{H}$, the function $F_w$ has at least a minimum point.

$v)$ For each $w \in \mathcal{H}$, there exists a unique minimum point for $F_w$}.

\end{theorem}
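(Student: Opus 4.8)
The plan is to route all the equivalences through statement (ii), using Theorem \ref{tteorema1} to identify the minimizers of $F_w$ and using finite-dimensionality to pass freely between injectivity, surjectivity and bijectivity of $T$.

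First I would record the crucial consequence of Theorem \ref{tteorema1}: for fixed $w$, a point $v_0$ minimizes $F_w$ if and only if $T(v_0)=w$ (part (i) gives one direction, part (ii) the other, and a local extremum is a minimizer by part (iv)). Hence the minimizer set of $F_w$ is exactly the affine solution set of the linear equation $T(v)=w$: it is nonempty precisely when $w\in\operatorname{Im}T$, and, being a translate of $\ker T$ whenever nonempty, it is a singleton precisely when $\ker T=\{0\}$.

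Next, (i) $\Leftrightarrow$ (ii). If $T$ is positive definite and $T(v)=0$, then $\langle T(v),v\rangle=0$ forces $v=0$, so $\ker T=\{0\}$ and in finite dimension $T$ is bijective. Conversely, if $T$ is bijective then $\ker T=\{0\}$; and if $\langle T(v),v\rangle=0$ for some $v$, then, since $(u,z)\mapsto\langle T(u),z\rangle$ is a symmetric positive semidefinite bilinear form, the Cauchy--Schwarz inequality for it yields $\langle T(v),z\rangle^{2}\le\langle T(v),v\rangle\langle T(z),z\rangle=0$ for all $z$, so $T(v)=0$ and thus $v=0$; hence $T$ is positive definite. (Alternatively one invokes the spectral theorem for the symmetric matrix $A$.) The clause $\operatorname{rank}A=n$ in (ii) is just the matrix translation of bijectivity.

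Finally I would close the loop. For (ii) $\Leftrightarrow$ (iv): in finite dimension $T$ is bijective iff $\operatorname{Im}T=\mathcal{H}$, and by the first paragraph every $F_w$ has a minimizer iff every $w$ lies in $\operatorname{Im}T$. For (ii) $\Leftrightarrow$ (iii): if $\ker T=\{0\}$ then by the first paragraph each $F_w$ has at most one minimizer; conversely, if $\ker T\neq\{0\}$, take $w=0$, and observe that every $v\in\ker T$ satisfies $F_0(v)=\langle T(v),v\rangle=0=F_0(0)$, so $F_0$ has more than one minimizer, contradicting (iii); hence (iii) forces $\ker T=\{0\}$. Since (v) is literally the conjunction of (iii) and (iv), it too is equivalent to (ii), and all five statements are equivalent. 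I expect the only genuinely substantive point to be the implication ``$T$ positive semidefinite and $\langle T(v),v\rangle=0\Rightarrow T(v)=0$'' used in (ii) $\Rightarrow$ (i); everything else is bookkeeping around Theorem \ref{tteorema1} and the rank--nullity theorem.
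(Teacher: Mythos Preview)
Your argument is correct. The paper itself does not prove this theorem: it is presented in Section~2.2 as one of two ``well-known Theorems regarding the quadratic affine forms on a Hilbert space'' and is simply stated, with Remark~\ref{oobservatia1} as the only commentary. So there is no paper proof to compare against; you have supplied a complete proof where the authors chose to invoke a standard result.

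For what it is worth, the route you take---reducing everything to the characterization ``the minimizer set of $F_w$ is $T^{-1}(\{w\})$'' obtained from Theorem~\ref{tteorema1}, and then reading off (iii), (iv), (v) from injectivity/surjectivity of $T$---is the natural one and is clearly what the authors had in mind, since Remark~\ref{oobservatia1} explicitly points out that Theorem~\ref{tteorema1} lets one conflate local extrema with global minima in the statement of Theorem~\ref{tteorema3}. The one nontrivial step, as you note, is that $\langle T(v),v\rangle=0$ implies $T(v)=0$ for positive semidefinite $T$; your Cauchy--Schwarz argument for the semidefinite form is a clean way to handle it.
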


\begin{remark}
\label{oobservatia1}
\rm According to the Theorem $\ref{tteorema1}$,
it follows that in the Theorem $\ref{tteorema3}$,
the minimum point can be understood either a local minimum point
or a global minimum point, or a local extremum point.
\end{remark}


\section{Increasing curves and curvilinear integrals}

Here we analyse the notion of increasing curve (respectively, decreasing curve)
and we prove some propositions concerning the curvilinear integrals.

\begin{lemma}
\label{llema1} {\it Let $f : [a,b] \to \mathbb{R}$ be a
derivable function with the property that for any $\xi_1$, $\xi_2 \in [a,b]$, with $\xi_1<\xi_2$,
we have $f(\xi_1)<f(\xi_2$) (i.e., $f$ is strictly increasing).
Suppose there exists $c\in [a,b]$, such that $f'(c)=0$. Then there exists a sequence $\eta_p \in [a,b]$, with $f'(\eta_p) \neq 0$, $\forall p$,
such that $\displaystyle \lim_{p \to \infty } \eta_{p} = c$.}
\end{lemma}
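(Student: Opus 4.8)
The plan is to argue by contradiction, using the elementary fact that a function whose derivative vanishes identically on an interval is constant there.

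Suppose, contrary to the conclusion, that there is no sequence $\eta_p\in[a,b]$ with $f'(\eta_p)\neq 0$ and $\eta_p\to c$. Since $f'(c)=0$, this nonexistence says exactly that $c$ is not an accumulation point of the set $\{\eta\in[a,b]:f'(\eta)\neq 0\}$; equivalently, there is some $\delta>0$ such that $f'(\eta)=0$ for every $\eta\in(c-\delta,c+\delta)\cap[a,b]$. So on the whole set $(c-\delta,c+\delta)\cap[a,b]$ the derivative of $f$ is zero.

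Next I would note that $(c-\delta,c+\delta)\cap[a,b]$ is an interval containing $c$, and hence (because $a<b$) it contains a nondegenerate closed subinterval $J=[\xi_1,\xi_2]$ with $\xi_1<\xi_2$: if $c<b$ one may take $J\subseteq[c,\min\{b,c+\delta\})$, while if $c=b$ one takes $J\subseteq(\max\{a,c-\delta\},c]$. Then applying the Lagrange mean value theorem to $f$ on $J$ — which is legitimate since $f$ is derivable — there is $\theta\in(\xi_1,\xi_2)$ with $f(\xi_2)-f(\xi_1)=f'(\theta)(\xi_2-\xi_1)=0$, so $f(\xi_1)=f(\xi_2)$. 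This contradicts the strict-monotonicity hypothesis $f(\xi_1)<f(\xi_2)$, so the assumption was false and a sequence with the required properties must exist.

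The argument is short, and the only point that needs a little attention — the "main obstacle", such as it is — is making sure the "punctured neighbourhood of $c$" reasoning still yields a genuine nondegenerate subinterval when $c$ happens to be an endpoint of $[a,b]$, i.e. handling the cases $c<b$ and $c=b$ separately as above. Everything else is a direct invocation of the mean value theorem.
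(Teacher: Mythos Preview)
Your proof is correct. The contradiction argument is clean: negating the conclusion gives a relative neighbourhood of $c$ in $[a,b]$ on which $f'$ vanishes identically, and then the mean value theorem forces $f$ to be constant on a nondegenerate subinterval, contradicting strict monotonicity. The care you take with the endpoint cases $c<b$ versus $c=b$ (implicitly using $a<b$, without which the lemma would be false) is exactly what is needed.

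As for comparison with the paper: the paper states Lemma~\ref{llema1} without proof, so there is no argument to compare yours against. Your short, self-contained proof would fit naturally in its place.
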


\begin{definition}
\label{ddefinitia5}
\rm
Let $t_0,t \in \mathbb{R}^m$, such that $t_0^{\alpha} \leq t^{\alpha}$
(respectively $t_0^{\alpha} \geq t^{\alpha}$),
$\forall \alpha=\overline{1,m}$ and let
$\gamma : [a,b] \to \mathbb{R}^m$, $\gamma(\tau)=
\Big( \gamma^{\alpha}(\tau) \Big)_{\alpha=\overline{1,m}}$, with
$\gamma(a)=t_0$, $\gamma(b)=t$, a piecewise $\mathcal{C}^1$ curve. We say that
$\gamma$ {\it increases} (respectively {\it decreases}) from $t_0$ to $t$, if for any $\tau_1,\tau_2\in [a,b]$,
with $\tau_1<\tau_2$, we have
$$
\left\{
  \begin{array}{ll}
    \gamma^{\alpha}(\tau_1)<\gamma^{\alpha}(\tau_2)\,
    (\hbox{respectively }
    \gamma^{\alpha}(\tau_1)> \gamma^{\alpha}(\tau_2)),
         & \hbox{if } \quad t_0^{\alpha} \neq t^{\alpha}; \\
    \gamma^{\alpha}(\tau_1)=\gamma^{\alpha}(\tau_2),
         & \hbox{if } \quad t_0^{\alpha}=t^{\alpha}.
  \end{array}
\right.
$$
\end{definition}

\begin{remark}
\label{oobservatia3}
\rm Let $t_0,t \in D$, such that $t_0^{\alpha} \leq t^{\alpha}$
(respectively $t_0^{\alpha} \geq t^{\alpha}$),
$\forall \alpha=\overline{1,m}$. There exists at least one $\mathcal{C}^1$ curve,
included in $D$, which increases (respectively decreases)
from the multitime $t_0$ to the multitime $t$. For example, the straight line
segment $[t_0,t]$, parameterized by:\,
$
\gamma : [0,1] \to D$,
$
\gamma(\tau )=( 1-\tau ) t_0+\tau t$,
$
\forall \tau \in [0,1],
$\,
is increasing (respectively decreasing) and included in $D$ (since $D$ is a convex set).
\end{remark}

\begin{lemma}
\label{llema2} {\it Let $t_0,t \in D$, such that $t_0^{\alpha} \leq t^{\alpha}$
(respectively $t_0^{\alpha} \geq t^{\alpha}$),
$\forall \alpha=\overline{1,m}$ and let
$\gamma : [a,b] \to D$, with
$\gamma(a)=t_0$, $\gamma(b)=t$, a piecewise $\mathcal{C}^1$ curve which increases
(respectively decreases) from $t_0$ to $t$}.

{\it If $P_1,P_2, \dots, P_m : D \to [0,\infty)$ are continuous functions and}
$\displaystyle
\int\limits_{\gamma}
P_{\alpha}(s)
\, \mbox{d}s^{\alpha}
=0,
$
{\it then, for any $\alpha=\overline{1,m}$, with
$t^{\alpha} \neq t_0^{\alpha}$, and for any
$\tau \in [a,b]$, we have $P_{\alpha}(\gamma(\tau))=0$}.
\end{lemma}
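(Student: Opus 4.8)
The key idea: the curvilinear integral $\int_\gamma P_\alpha(s)\,\mathrm{d}s^\alpha$ should decompose as a sum over $\alpha$ of ordinary integrals $\int_a^b P_\alpha(\gamma(\tau))\,(\gamma^\alpha)'(\tau)\,\mathrm{d}\tau$ (piecewise), and for the increasing case each factor $(\gamma^\alpha)'(\tau) \geq 0$ for indices with $t^\alpha \neq t_0^\alpha$, and $(\gamma^\alpha)'(\tau) = 0$ for indices with $t^\alpha = t_0^\alpha$. Since $P_\alpha \geq 0$ too, every summand is a nonnegative integral, so the total being zero forces each $\int_a^b P_\alpha(\gamma(\tau))\,(\gamma^\alpha)'(\tau)\,\mathrm{d}\tau = 0$, hence (by continuity and nonnegativity of the integrand) $P_\alpha(\gamma(\tau))\,(\gamma^\alpha)'(\tau) = 0$ for every $\tau$. (The decreasing case is identical after replacing $\gamma^\alpha$ by $-\gamma^\alpha$, or runs with all signs reversed.)

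The remaining work is to upgrade "$P_\alpha(\gamma(\tau))\,(\gamma^\alpha)'(\tau) = 0$ for all $\tau$" to "$P_\alpha(\gamma(\tau)) = 0$ for all $\tau$" when $t^\alpha \neq t_0^\alpha$. On the open set where $(\gamma^\alpha)'(\tau) \neq 0$ this is immediate. The difficulty is the zeros of $(\gamma^\alpha)'$. Here is exactly where Lemma \ref{llema1} enters: since $\gamma$ increases from $t_0$ to $t$ and $t^\alpha \neq t_0^\alpha$, the scalar function $\gamma^\alpha : [a,b] \to \mathbb{R}$ is strictly increasing; applying Lemma \ref{llema1} (on each $\mathcal{C}^1$ piece, then patching) tells us that every point $c$ with $(\gamma^\alpha)'(c) = 0$ is a limit of points $\eta_p$ with $(\gamma^\alpha)'(\eta_p) \neq 0$. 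At each such $\eta_p$ we already have $P_\alpha(\gamma(\eta_p)) = 0$, and by continuity of $P_\alpha \circ \gamma$ we get $P_\alpha(\gamma(c)) = \lim_p P_\alpha(\gamma(\eta_p)) = 0$. Thus $P_\alpha(\gamma(\tau)) = 0$ for every $\tau \in [a,b]$.

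I would organize it as: (1) recall/establish the parametric formula for the curvilinear integral, being slightly careful about the piecewise-$\mathcal{C}^1$ subdivision $a = \tau_0 < \tau_1 < \cdots < \tau_N = b$; (2) observe that $\gamma$ increasing forces $(\gamma^\alpha)'(\tau) \geq 0$ where it exists (it's the derivative of a monotone function) and $(\gamma^\alpha)' \equiv 0$ for the indices with equal endpoints, so the whole integral is a finite sum of nonnegative terms; (3) conclude each term vanishes identically as a function of $\tau$; (4) invoke Lemma \ref{llema1} plus continuity to kill $P_\alpha \circ \gamma$ at the critical points of $\gamma^\alpha$ as well. The main obstacle — and the only genuinely nontrivial point — is step (4), the handling of the zero set of $(\gamma^\alpha)'$, which is precisely what Lemma \ref{llema1} was proved to supply; everything else is bookkeeping with the sign conditions.
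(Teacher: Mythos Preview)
Your proposal is correct and follows essentially the same approach as the paper's proof: subdivide $[a,b]$ into $\mathcal{C}^1$ pieces, discard the indices $\alpha$ with $t_0^\alpha = t^\alpha$ since $\dot\gamma^\alpha \equiv 0$ there, use monotonicity to get $\dot\gamma^\alpha \geq 0$ and hence that each piecewise integral is nonnegative, conclude each integrand vanishes by continuity, and then invoke Lemma~\ref{llema1} on each $\mathcal{C}^1$ subinterval together with continuity of $P_\alpha\circ\gamma$ to handle the zeros of $\dot\gamma^\alpha$. The organization you sketch (steps (1)--(4)) mirrors the paper's argument almost line for line.
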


\begin{proof}
There exist the real numbers $\tau_0, \tau_1, \dots, \tau_q$,
such that
$$
a=\tau_0 < \tau_1< \dots < \tau_q=b,
\quad
\mbox{with }\,
q \geq 1, \,\,
q \in \mathbb{N},
$$
and the curve $\gamma$ is of class $\mathcal{C}^1$ on each subinterval
$[\tau_j,\tau_{j+1}]$, $j=\overline{0,q-1}$.

We know that\,
$\displaystyle
\sum_{j=0}^{q-1}
\int\limits_{\tau_j}^{\tau_{j+1}}
\sum_{\alpha=1}^m
P_{\alpha}(\gamma(\tau))
\dot{\gamma^{\alpha}}(\tau)
\, \mbox{d}\tau
=0.
$
But, for $\alpha$, with $t^{\alpha}=t_0^{\alpha}$, the function
$\gamma^{\alpha}(\cdot)$ is constant on $[\tau_0,\tau_{q}]$
(see the Definition \ref{ddefinitia5}). Consequently,
$\dot{\gamma^{\alpha}}(\tau)=0$,
$\forall \tau \in  [\tau_0,\tau_{q}]$, hence
$$
\sum_{j=0}^{q-1}
\int\limits_{\tau_j}^{\tau_{j+1}}
\sum_{\alpha \mbox{ with } t^{\alpha}\neq t^{\alpha}_0}
P_{\alpha}(\gamma(\tau))
\dot{\gamma^{\alpha}}(\tau)
\, \mbox{d}\tau=0.
$$
Since $\gamma^{\alpha}$ is an increasing curve, on each subinterval
$[\tau_j,\tau_{j+1}]$, we have $\dot{\gamma^{\alpha}}(\tau) \geq 0$. We deduce that
each from the $q$ integrals, which are terms in the foregoing sum, are $\geq 0$.
In fact, each integral vanishes as term in a sum equal to zero, i.e.,
$$
\int\limits_{\tau_j}^{\tau_{j+1}}
\sum_{\alpha \mbox{ with } t^{\alpha}\neq t^{\alpha}_0}
P_{\alpha}(\gamma(\tau))
\dot{\gamma^{\alpha}}(\tau)
\, \mbox{d}\tau=0,
\quad
\forall j=\overline{0,q-1}.
$$
Since the functions that appear in the sum under the integral
are continuous and greater or equal to zero,
it follows that for each index $j=\overline{0,q-1}$, and for any $\alpha$,
with $t^{\alpha}\neq t^{\alpha}_0$, we have
$
P_{\alpha}(\gamma(\tau))
\dot{\gamma^{\alpha}}(\tau)
=0$,
$
\forall
\tau \in [\tau_j,\tau_{j+1}].
$
It follows that for each $\alpha$, with
$t^{\alpha}\neq t^{\alpha}_0$, we have
\begin{equation*}
P_{\alpha}(\gamma(\tau))
=0,
\quad
\forall
\tau \in [\tau_j,\tau_{j+1}],
\mbox{ with }
\dot{\gamma^{\alpha}}(\tau)
\neq 0.
\tag{$\ast \ast$}
\end{equation*}

Let $\alpha$ with $t^{\alpha} \neq t^{\alpha}_0$,
$\alpha$ arbitrarily, but fixed.
Let $c \in [\tau_j,\tau_{j+1}]$, with
$\dot{\gamma^{\alpha}}(c)=0$.

We apply Lemma \ref{llema1} to the function
$\gamma^{\alpha}(\cdot)$, on the interval
$[\tau_j,\tau_{j+1}]$. Hence there exists a sequence $\eta_p \in [\tau_j,\tau_{j+1}]$, with
$\dot{\gamma^{\alpha}}(\eta_p) \neq 0, \forall p$,
such that $\displaystyle \lim_{p \to \infty} \eta_p =c$.

Since $\dot{\gamma^{\alpha}}(\eta_p) \neq 0$,
according to $(\ast \ast)$, we have \,
$ P_{\alpha}(\gamma(\eta_p))=0$, \, $\forall p$.
Consequently
$$
0
=
\lim_{p \to \infty}
P_{\alpha}(\gamma(\eta_p))
=
P_{\alpha}(\gamma(c)).
$$
In this way we showed that $P_{\alpha}(\gamma(c))=0$, \,
$\forall c \in [\tau_j,\tau_{j+1}]$, with
$\dot{\gamma^{\alpha}}(c)=0$. Hence, via $(\ast \ast)$, it follows
$P_{\alpha}(\gamma(\tau))=0$, \,
$\forall \tau \in [\tau_j,\tau_{j+1}]$; and
since $j=\overline{0,q-1}$ is arbitrarily, we find\,
$
P_{\alpha}(\gamma(\tau))
=0$,
$
\forall
\tau \in [a,b].
$
\end{proof}

\section{Im - gramian space and conditions for \\multitime controllability}

In the papers \cite{8}, \cite{3} it was defined the controllability gramian, $\mathcal{C}(t_0,t)$, but only
in the case that the relations $(\ref{II6})$ hold. In this Section we shall extend
the definition to the general situation. But in this case, when
the relations $(\ref{II6})$ are not indispensable true, the gramian will depend on the curve
$\gamma$. It appears the notion of $\gamma$ - gramian matrix.
Instead of controllability gramian image we shall introduce the Im - gramian space $\mathcal{W}(t_0,t)$. 
We formulate necessary conditions of controllability expressed
using the space $\mathcal{W}(t_0,t)$.

\begin{definition}
\label{ddefinitia4}
\rm
Let us suppose that the matrices
$M_\alpha(\cdot)$ verify the relations $(\ref{II4})$.

$i)$ Let $\gamma : [a,b] \to D$ be a piecewise $\mathcal{C}^1$ curve,
with fixed origin $t_0=\gamma (a)$. The matrix
$$
\mathcal{C}_{\gamma}:=
\int\limits_{\gamma}^{}
\chi(t_0,s)N_{\alpha}(s)
N_{\alpha}^\top(s)\chi(t_0,s)^\top
\mbox{d}s^{\alpha}
$$
is called $\gamma$ - {\it gramian matrix} associated to the PDE system $(\ref{II3})$.

$ii)$ Suppose that, for any
$\alpha,\beta=\overline{1,m}$, the relations $(\ref{II6})$ are
true. According to the Proposition $\ref{ppropozitia1}$, in this case,
the curvilinear integral from $i)$ depends only on the  ends points and
not on the curve joining these ends. Let $t_0,t\in D$
and $\gamma_{t_0,t} : [a,b] \to D$ be a piecewise $\mathcal{C}^1$ curve, with
$\gamma_{t_0,t}(a)=t_0$ and $\gamma_{t_0,t}(b)=t$. The matrix
$$
\mathcal{C}(t_0,t):=
\int\limits_{\gamma_{t_0,t}}^{}
\chi(t_0,s)N_{\alpha}(s)
N_{\alpha}^\top(s)\chi(t_0,s)^\top
\mbox{d}s^{\alpha}
$$
is called the {\it controllability gramian}.
\end{definition}

\begin{remark}
\label{oobservatia2}
\rm Let us suppose that the matrices
$M_\alpha(\cdot)$ verify the relations $(\ref{II4})$.
Let $\gamma : [a,b] \to D$ be a piecewise $\mathcal{C}^1$ curve, and
$t_0:=\gamma (a)$, $t:=\gamma (b)$.
Let $\gamma^{-} : [a,b] \to D$, $\gamma^{-}(\tau)
=\gamma (a+b-\tau)$, $\forall \tau \in [a,b]$. Obviously,
we have $\gamma^{-}(a)=t$, $\gamma^{-}(b)=t_0$.

One verifies immediately
$$
\chi(t,t_0)
\mathcal{C}_{\gamma}
\chi(t,t_0)^\top
=
-
\mathcal{C}_{\gamma^{-}},
\quad
\mbox{hence}
\quad
\mbox{rank}\, (\mathcal{C}_{\gamma})
=
\mbox{rank}\, (\mathcal{C}_{\gamma^{-}}).
$$
Taking into account that $\chi(t,t_0)^\top$ is invertible, it follows that the equality
$\mbox{Im}(\mathcal{C}_{\gamma^{-}})$
$=\chi(t,t_0)\mbox{Im}(\mathcal{C}_{\gamma})$ holds.
\end{remark}

\begin{remark}
\label{oobservatia20}
\rm
Let $t_0,t \in \mathbb{R}^m$, such that $t_0^{\alpha} \leq t^{\alpha}$
(respectively $t_0^{\alpha} \geq t^{\alpha}$),
$\forall \alpha=\overline{1,m}$ and let
$\gamma : [a,b] \to \mathbb{R}^m$, with
$\gamma(a)=t_0$, $\gamma(b)=t$, a piecewise $\mathcal{C}^1$ curve.

One remarks that $\gamma$ increases (respectively decreases) from $t_0$ to $t$,
if and only if $\gamma^{-}$ decreases (respectively increases) from $t_0$ to $t$.
\end{remark}

\begin{definition}
\label{ddefinitia55}
\rm Suppose that the matrices
$M_\alpha(\cdot)$ verify the relations
$(\ref{II4})$. Let $t_0,t \in D$, such that $t_0^{\alpha} \leq t^{\alpha}$
(respectively $t_0^{\alpha} \geq t^{\alpha}$),
$\forall \alpha=\overline{1,m}$. We consider the set
\begin{equation}
\label{Wt0t}
\mathcal{W}(t_0,t):=
\bigcap_{\gamma_{t_0,t}}
\mbox{Im}(\mathcal{C}_{\gamma_{t_0,t}})
\end{equation}
where the intersection is taken over all curves $\gamma_{t_0,t}$, of piecewise $\mathcal{C}^1$ class, included in $D$, which increases (respectively decreases)
from $t_0$ to $t$. The set $\mathcal{W}(t_0,t)$ is a vector subspace
of $\mathbb{R}^n$ and we shall call it the {\it Im - gramian space}.
\end{definition}

\begin{remark}
\label{oobservatia200}
\rm
From the Remarks $\ref{oobservatia2}$ and $\ref{oobservatia20}$, it follows that, in the conditions of Definition $\ref{ddefinitia55}$, we have
\begin{equation}
\label{Wt0tWtt0}
\mathcal{W}(t,t_0)
=\chi(t,t_0)\mathcal{W}(t_0,t).
\end{equation}
\end{remark}

\begin{remark}
\label{oobservatia30}
\rm
Furthermore, if for any $\alpha,\beta=\overline{1,m}$, the relations
$(\ref{II6})$ are true, then $\mathcal{W}(t_0,t)=\mbox{Im}(\mathcal{C}(t_0,t))$\,
(see also the Definition $\ref{ddefinitia4}$).
\end{remark}

\begin{proposition}
\label{ppropozitia2} {\it Suppose that the matrices $M_\alpha(\cdot)$
verify the relations} $(\ref{II4})$.
{\it Let $t_0,t \in D$, such that $t_0^{\alpha} \leq t^{\alpha}$
(respectively, $t_0^{\alpha} \geq t^{\alpha}$),
$\forall\alpha=\overline{1,m}$}. {\it Then}
\begin{equation}
\label{IIincluziune}
\mathcal{V}(t_0,t)
\subseteq
\mathcal{W}(t_0,t).
\end{equation}
\end{proposition}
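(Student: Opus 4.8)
The plan is to fix an arbitrary piecewise $\mathcal{C}^1$ curve $\gamma=\gamma_{t_0,t}:[a,b]\to D$ which increases (respectively, decreases) from $t_0$ to $t$, and to show that every element of $\mathcal{V}(t_0,t)$ lies in $\mbox{Im}(\mathcal{C}_{\gamma})$. Since $\gamma$ is arbitrary among the curves over which the intersection in $(\ref{Wt0t})$ is taken, and that family is nonempty by Remark $\ref{oobservatia3}$ (it contains the segment $[t_0,t]$), this immediately gives $\mathcal{V}(t_0,t)\subseteq\bigcap_{\gamma}\mbox{Im}(\mathcal{C}_{\gamma})=\mathcal{W}(t_0,t)$.

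The first observation is that the matrix $\mathcal{C}_{\gamma}$ is symmetric: the integrand $\chi(t_0,s)N_{\alpha}(s)N_{\alpha}^\top(s)\chi(t_0,s)^\top$ is symmetric for each $\alpha$, and symmetry survives curvilinear integration. Hence, as for any real symmetric matrix, $\mbox{Im}(\mathcal{C}_{\gamma})=\big(\ker\mathcal{C}_{\gamma}\big)^{\perp}$. Thus it suffices to prove that for every control $(u_{\alpha})_{\alpha=\overline{1,m}}$ the vector $v:=\int_{\gamma}\chi(t_0,s)N_{\alpha}(s)u_{\alpha}(s)\,\mbox{d}s^{\alpha}$ is orthogonal to $\ker\mathcal{C}_{\gamma}$.

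Next I would take $w\in\ker\mathcal{C}_{\gamma}$, so that $0=w^\top\mathcal{C}_{\gamma}w=\int_{\gamma}\big\|N_{\alpha}^\top(s)\chi(t_0,s)^\top w\big\|^{2}\,\mbox{d}s^{\alpha}$. Introducing the continuous nonnegative functions $P_{\alpha}:D\to[0,\infty)$, $P_{\alpha}(s):=\big\|N_{\alpha}^\top(s)\chi(t_0,s)^\top w\big\|^{2}$, Lemma $\ref{llema2}$ applies — this is exactly the step where the restriction to increasing (respectively, decreasing) curves is indispensable, and it is the heart of the argument — and yields $N_{\alpha}^\top(\gamma(\tau))\chi(t_0,\gamma(\tau))^\top w=0$ for every $\tau\in[a,b]$ and every $\alpha$ with $t^{\alpha}\neq t_0^{\alpha}$.

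Finally I would expand $w^\top v=\int_a^b\sum_{\alpha}\big(w^\top\chi(t_0,\gamma(\tau))N_{\alpha}(\gamma(\tau))\big)u_{\alpha}(\gamma(\tau))\,\dot{\gamma^{\alpha}}(\tau)\,\mbox{d}\tau$ and check that each summand vanishes: for indices $\alpha$ with $t^{\alpha}=t_0^{\alpha}$ the component $\gamma^{\alpha}$ is constant, hence $\dot{\gamma^{\alpha}}\equiv0$; for indices $\alpha$ with $t^{\alpha}\neq t_0^{\alpha}$ the row $w^\top\chi(t_0,\gamma(\tau))N_{\alpha}(\gamma(\tau))$ is the transpose of $N_{\alpha}^\top(\gamma(\tau))\chi(t_0,\gamma(\tau))^\top w=0$. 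Therefore $w^\top v=0$, so $v\in(\ker\mathcal{C}_{\gamma})^{\perp}=\mbox{Im}(\mathcal{C}_{\gamma})$, which completes the argument. The only delicate point is the passage from the vanishing of the curvilinear integral of the nonnegative functions $P_{\alpha}$ to their pointwise vanishing along $\gamma$, and this is precisely what Lemma $\ref{llema2}$ supplies; everything else is routine linear algebra.
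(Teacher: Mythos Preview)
Your proof is correct and follows essentially the same approach as the paper: reduce to showing $\mathcal{V}(t_0,t)\subseteq\mbox{Im}(\mathcal{C}_{\gamma})$ for an arbitrary increasing (resp.\ decreasing) curve, pass to orthogonal complements, use $w^\top\mathcal{C}_{\gamma}w=0$ together with Lemma~\ref{llema2} to kill the row $w^\top\chi(t_0,\gamma(\tau))N_{\alpha}(\gamma(\tau))$ along the curve, and then check that the pairing with any element of $\mathcal{V}(t_0,t)$ vanishes term by term. The only cosmetic difference is that you invoke the symmetry of $\mathcal{C}_{\gamma}$ to write $\mbox{Im}(\mathcal{C}_{\gamma})=(\ker\mathcal{C}_{\gamma})^{\perp}$, while the paper uses the general identity $(\mbox{Im}(\mathcal{C}_{\gamma}))^{\perp}=\mbox{Ker}((\mathcal{C}_{\gamma})^{\top})$ and works with $\mbox{Ker}((\mathcal{C}_{\gamma})^{\top})$ directly; the substance is identical.
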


\begin{proof}
Let $\gamma$ be a piecewise $\mathcal{C}^1$  curve,
included in $D$, which increases (respectively, decreases)
from $t_0$ to $t$. It is sufficient to prove the inclusion
\begin{equation}
\label{II7}
\mathcal{V}(t_0,t)
\subseteq
\mbox{Im}(\mathcal{C}_{\gamma}),
\end{equation}
which is equivalent to
$\displaystyle
(\mathcal{V}(t_0,t))^\bot
\supseteq
(\mbox{Im}(\mathcal{C}_{\gamma}))^\bot
=
\mbox{Ker} ((\mathcal{C}_{\gamma})^\top)$.

We have $v \in \mbox{Ker} ((\mathcal{C}_{\gamma})^\top)
\Longleftrightarrow
((\mathcal{C}_{\gamma})^\top)v=0
\Longleftrightarrow
v^\top \mathcal{C}_{\gamma}=0.$ Hence
$v^\top \mathcal{C}_{\gamma}v=0$, equivalent to
$$
\int\limits_{\gamma}
v^\top
\chi(t_0,s)
N_{\alpha}(s)
N_{\alpha}^\top(s)
\chi(t_0,s)^\top
v
\, \mbox{d}s^{\alpha}
=
\int\limits_{\gamma}
\Big \|
v^\top
\chi(t_0,s)
N_{\alpha}(s)
\Big \|^2
\, \mbox{d}s^{\alpha}
=0.
$$
We apply Lemma \ref{llema2}, for $\displaystyle P_{\alpha}(s)=\Big \|
v^\top
\chi(t_0,s)
N_{\alpha}(s)
\Big \|^2$ and it follows that for any
$\alpha$, with $t^{\alpha} \neq t_0^{\alpha}$, and
for any $\tau$, we have\,\,
$\displaystyle
v^\top
\chi(t_0,\gamma(\tau))
N_{\alpha}(\gamma(\tau))
=0$.

Let now $(u_{\alpha}(\cdot))_{\alpha=\overline{1,m}}$ be an arbitrary control. We get
$$
\left \langle
\int\limits_{\gamma}^{}
\chi(t_0,s)N_{\alpha}(s)
u_{\alpha}(s)
\, \mbox{d}s^{\alpha} \,
; \,
v
\right \rangle
=
v^\top
\int\limits_{\gamma}^{}
\chi(t_0,s)N_{\alpha}(s)
u_{\alpha}(s)
\, \mbox{d}s^{\alpha}
$$
$$
=
\int\limits_{\gamma}
\sum_{\alpha \mbox{ with } t^{\alpha} \neq t^{\alpha}_0}
v^\top
\chi(t_0,s)N_{\alpha}(s)
u_{\alpha}(s)
\, \mbox{d}s^{\alpha}
=0.
$$
Hence $v \in (\mathcal{V}(t_0,t))^\bot$,
whence we obtain the inclusion
$$
\mbox{Ker} ((\mathcal{C}_{\gamma})^\top)
\subseteq
(\mathcal{V}(t_0,t))^\bot.
$$
\end{proof}

From the Theorem \ref{tteorema5} and Proposition \ref{ppropozitia2} one obtains
immediately the next Theorem (in which we give necessary conditions
for controllability):

\begin{theorem}
\label{tteorema6} {\it Let us consider the PDE system $(\ref{II3})$, with the
matrix functions $M_\alpha(\cdot)$ verifying the relations} $(\ref{II4})$.

{\it $i)$ Let $t_0,t \in D$, such that
$t_0^{\alpha} \leq t^{\alpha}$ (respectively, $t_0^{\alpha} \geq t^{\alpha}$),
$\forall\alpha=\overline{1,m}$. If the phase $(t_0,x_0)$
transfers into the phase $(t,y)$, then}
$$
x_0-\chi(t_0,t)y \in \mathcal{W }( t_0,t ).
$$

$ii)$ {\it If the phase $(t_0,x_0)$ is controllable, then
$\exists t\in D$, with $t^{\alpha}> t^{\alpha}_0$,
$\forall \alpha$, such that}\, $x_0\in \mathcal{W }( t_0,t )$.

$iii)$ {\it Let $t_0, t \in D$, with $t_0^{\alpha}<t^{\alpha}$, $\forall \alpha$.
If the PDE system $(\ref{II3})$ is completely controllable from the
multitime $t_0$ into the multitime $t$, then} $\mathcal{W }(t_0,t )=\mathbb{R}^n$.
\end{theorem}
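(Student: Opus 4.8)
The plan is to deduce all three statements directly from Theorem \ref{tteorema5} together with the inclusion $\mathcal{V}(t_0,t) \subseteq \mathcal{W}(t_0,t)$ furnished by Proposition \ref{ppropozitia2}. Theorem \ref{tteorema5} already characterizes each of the controllability notions in terms of the controllability space $\mathcal{V}(t_0,t)$, so the only thing to do is to enlarge $\mathcal{V}(t_0,t)$ to $\mathcal{W}(t_0,t)$ in each conclusion; this weakens the statement and thereby converts a characterization into a necessary condition.

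For part $i)$, I would start from Theorem \ref{tteorema5} $ii)$: if the phase $(t_0,x_0)$ transfers into the phase $(t,y)$, then $x_0 - \chi(t_0,t)y \in \mathcal{V}(t_0,t)$. Since by hypothesis $t_0^{\alpha} \leq t^{\alpha}$ (respectively $t_0^{\alpha} \geq t^{\alpha}$) for all $\alpha$, Proposition \ref{ppropozitia2} applies and gives $\mathcal{V}(t_0,t) \subseteq \mathcal{W}(t_0,t)$; combining the two yields $x_0 - \chi(t_0,t)y \in \mathcal{W}(t_0,t)$.

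For part $ii)$, I would invoke Theorem \ref{tteorema5} $iii)$: if $(t_0,x_0)$ is controllable, there exists $t \in D$ with $t^{\alpha} > t_0^{\alpha}$ for all $\alpha$ and $x_0 \in \mathcal{V}(t_0,t)$. In particular $t_0^{\alpha} \leq t^{\alpha}$, so Proposition \ref{ppropozitia2} gives $\mathcal{V}(t_0,t) \subseteq \mathcal{W}(t_0,t)$ and hence $x_0 \in \mathcal{W}(t_0,t)$ for this same $t$. For part $iii)$, Theorem \ref{tteorema5} $iv)$ gives $\mathcal{V}(t_0,t) = \mathbb{R}^n$ under complete controllability from $t_0$ to $t$; since $t_0^{\alpha} < t^{\alpha}$ implies $t_0^{\alpha} \leq t^{\alpha}$, Proposition \ref{ppropozitia2} forces $\mathbb{R}^n = \mathcal{V}(t_0,t) \subseteq \mathcal{W}(t_0,t) \subseteq \mathbb{R}^n$, so $\mathcal{W}(t_0,t) = \mathbb{R}^n$.

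There is no genuine obstacle: the substance of the theorem is entirely contained in Proposition \ref{ppropozitia2} (which itself rests on Lemma \ref{llema2} and the monotonicity built into the notion of increasing/decreasing curve). The only point demanding a little care is verifying, in each of the three cases, that the ordering hypothesis required by Proposition \ref{ppropozitia2} — namely $t_0^{\alpha} \leq t^{\alpha}$ for all $\alpha$, or the reverse — is actually available: in parts $ii)$ and $iii)$ this is automatic from the strict inequalities $t_0^{\alpha} < t^{\alpha}$, and in part $i)$ it is explicitly part of the hypotheses.
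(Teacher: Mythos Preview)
Your proposal is correct and follows exactly the same approach as the paper, which simply states that Theorem~\ref{tteorema6} is obtained immediately from Theorem~\ref{tteorema5} together with the inclusion of Proposition~\ref{ppropozitia2}. Your write-up merely spells out this deduction part by part, with nothing added or missing.
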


\begin{remark}
\label{oobservatiaTeza}
\rm
If the hypotheses of Proposition \ref{ppropozitia2} are completed by the relations
$(\ref{II6})$, then the inclusion $(\ref{II7})$ becomes equality (\cite{3}), i.e.,
$\mathcal{V}(t_0,t)=\mbox{Im}(\mathcal{C}(t_0,t))=\mathcal{W}(t_0,t)$. In
these conditions, one deduces a theorem,
similar to the Theorem \ref{tteorema6}, but
in which one obtains necessary and sufficient conditions for controllability.
\end{remark}

Generally, the inclusion (\ref{IIincluziune}) is strictly. We shall justify
this statement by the following example:
\begin{example}
\label{IInuincluziune}
\rm
We consider: $m=2$, $n=2$, $k=2$,
\begin{equation*}
M_1=M_2=0,
\quad \quad
N_2=0,
\quad \quad
N_1(s^1,s^2)=
\left(
  \begin{array}{cc}
    s^2 & 0  \\
    0 & s^2  \\
  \end{array}
\right)
=s^2I_2,
\end{equation*}
$$
a)\, D=\mathbb{R}^2;
\quad \quad \quad
b)\, D=
\left \{
\left(
t^1,t^2
\right)\in \mathbb{R}^2 \,
\big |\,
t^2>0
\right \}
=
\mathbb{R} \times (0,\infty).
$$
We shall show that for any $t_0,t\in D$, with $t_0^1<t^1$, $t_0^2<t^2$, in the case $a)$,
we have $\mathcal{V}(t_0,t)=0$ and $\mathcal{W}(t_0,t)=\mathbb{R}^2=\mathcal{M}_{2,1}(\mathbb{R})$, while
in the case $b)$, the equalities $\mathcal{V}(t_0,t)=\mathcal{W}(t_0,t)=\mathbb{R}^2$ hold.
\end{example}

We have $\chi(s_0,s)=I_2$,
$\forall (s_0,s) \in \mathbb{R}^2
\times \mathbb{R}^2$.

$a)$ According to the Definition \ref{ddefinitia1}, the pair $(u_1, u_2)$
is a control if and only if (\ref{II5}) is true, i.e.,
\begin{equation*}
\frac{\partial }{\partial s^{2}}
\left(
N_{1}(s^1,s^2)
\right)
u_{1}(s^1,s^2)
+
N_{1}(s^1,s^2)
\frac{\partial }{\partial s^{2}}
\left(  u_{1}(s^1,s^2)  \right)
=0,
\quad
\forall (s^1,s^2) \in \mathbb{R}^2,
\end{equation*}
or
\begin{equation}
\label{II4.8}
\frac{\partial }{\partial s^{2}}
\left(
s^2u_{1}(s^1,s^2)
\right)
=
0,
\quad
\forall (s^1,s^2)\in \mathbb{R}^2,
\end{equation}
equivalent to the fact that there exists a $\mathcal{C}^1$ function $f:\mathbb{R} \to \mathbb{R}^2=
\mathcal{M}_{2,1}(\mathbb{R})$, such that
\begin{equation*}
s^2u_{1}(s^1,s^2)=f(s^1),
\quad
\forall (s^1,s^2)\in \mathbb{R}^2.
\end{equation*}
Setting $s^2=0$, one obtains $f(s^1)=0$,
$\forall s^1\in \mathbb{R}$. Hence
\begin{equation*}
s^2u_{1}(s^1,s^2)=0,
\quad
\forall (s^1,s^2)\in \mathbb{R}^2,
\end{equation*}
whence it follows $u_{1}(s^1,s^2)=0$,
$\forall (s^1,s^2)\in \mathbb{R}^2$,
with $s^2\neq 0$. From the continuity of $u_1$,
we deduce that
\begin{equation*}
u_{1}(s^1,s^2)=0,
\quad
\forall (s^1,s^2)\in \mathbb{R}^2.
\end{equation*}
For $u_1=0$ and for any $u_2$,
the relation (\ref{II4.8}) is verified.

Hence the set of controls is made of the pairs $(u_1,u_2)$, with $u_1=0$
and $u_2:\mathbb{R}^2 \to \mathbb{R}^2=
\mathcal{M}_{2,1}(\mathbb{R})$ as an arbitrary $\mathcal{C}^1$ function.

Let $\gamma$ be a piecewise $\mathcal{C}^1$ curve which increases from
the multitime $t_0$ to the multitime $t$. Since $u_1$ and $N_2$ vanish, it follows
\begin{equation*}
\int\limits_{\gamma }^{}
\chi(t_0,s)N_{1}(s)
u_{1}(s)
\mbox{d}s^{1}
+
\chi(t_0,s)N_{2}(s)
u_{2}(s)
\mbox{d}s^{2}
=0,
\end{equation*}
hence $\mathcal{V}(t_0,t)=0$ (here it does no matter that $\gamma$ is increasing,
since the curvilinear integral is path independent).

According to the definition,\,
$\displaystyle
\mathcal{C}_{\gamma}
=
\int\limits_{\gamma }^{}
(s^2)^2I_2
\mbox{d}s^{1}
=
\Big(
\int\limits_{\gamma }^{}
(s^2)^2
\mbox{d}s^{1}
\Big)
I_2$.
We show that $\displaystyle \int\limits_{\gamma } (s^2)^2
\mbox{d}s^{1} \neq 0$. Indeed, suppose that we would have $\displaystyle \int\limits_{\gamma } (s^2)^2
\mbox{d}s^{1} = 0$. According to Lemma \ref{llema2}, it follows that for any
$\tau$, we have $(\gamma^2(\tau))^2=0$, hence $\gamma^2$
is constant (zero), whence we deduce that $t_0^2=t^2$, which is false.

We deduce that the matrix $\mathcal{C}_{\gamma}$ has the rank $2$. Hence
$\mbox{Im} ( \mathcal{C}_{\gamma} )=\mathbb{R}^2$; it follows the equality $\mathcal{W}(t_0,t)=\mathbb{R}^2$.

$b)$ In the same manner as in the case $a)$, one deduces that $(u_1, u_2)$
is a control if and only if the relation (\ref{II4.8})
is true on the set $D$. Let
$v \in \mathbb{R}^2=\mathcal{M}_{2,1}(\mathbb{R})$. We select
$$
u_1(s^1,s^2)=\frac{1}{s^2(t^1-t_0^1)} \cdot v,
\quad
u_2(s^1,s^2)=0,
\quad
\forall (s^1,s^2)
\in \mathbb{R} \times (0,\infty)
=D.
$$
We remark immediately that for any pair $(u_1, u_2)$,
the relations (\ref{II4.8}) is true, hence $(u_1, u_2)$ is a control.

Let $\gamma$ be a piecewise $\mathcal{C}^1$ curve,
included in $D$, which joins $t_0$ to $t$. Let us determine
$\mathcal{V}(t_0,t)$. One can easily obtain
\begin{equation*}
\int\limits_{\gamma }^{}
\chi(t_0,s)N_{1}(s)
u_{1}(s)
\mbox{d}s^{1}
+
\chi(t_0,s)N_{2}(s)
u_{2}(s)
\mbox{d}s^{2}
=v.
\end{equation*}
Since $v$ is arbitrary, it follows
$\mathcal{V}(t_0,t)=\mathbb{R}^2$, and from (\ref{IIincluziune}) it follows
also the equality $\mathcal{W}(t_0,t)=\mathbb{R}^2$.
\begin{remark}
\label{oobservatia300}
\rm
For the system of Example $\ref{IInuincluziune}$, case $a)$, $D=\mathbb{R}^2$, the inclusion $(\ref{IIincluziune})$ is strictly.
Also, to the same extent, one deduces that the converse of the Theorem $\ref{tteorema6}$ is not always true.
Indeed, for any $t_0,t \in \mathbb{R}^2$, with $t_0^1<t^1$, $t_0^2<t^2$, we have $\mathcal{V}(t_0,t)=0$,
hence from the Theorem $\ref{tteorema5}$, $iii)$, it follows that no state $(t_0,x_0)$,
with $x_0 \neq 0$, is controllable; however $\mathcal{W}(t_0,t)=\mathbb{R}^2=\mathbb{R}^n$.

For the same system, considered in the case $b)$,
$D=\mathbb{R} \times (0, \infty)$, for any $t_0,t \in D$, with
$t_0^1<t^1$, $t_0^2<t^2$, we have $\mathcal{V}(t_0,t)=\mathbb{R}^2=\mathbb{R}^n$,
whence according to the Theorem $\ref{tteorema5}$, $iv)$,
it follows that the system is completely controllable.

One remarks that for the controllability, it is important the domain on which
we consider the matrix functions $M_\alpha(\cdot)$ and $N_\alpha(\cdot)$ which define the system.
\end{remark}

\section{A decision functional for \\multitime controllability}

Roughly speaking there are essentially two types of methods to study
the controllability of linear PDE, namely direct methods and dual methods.

Suppose that the matrices $M_{\alpha}(\cdot)$ verify the relations $(\ref{II4})$. We
consider the adjoint (dual) PDE system of the
PDE system $(\ref{II3})$, i.e.,
\begin{equation}
\label{II8}
\frac{\partial \varphi}{\partial t^{\alpha}}
=-
(M_{\alpha}(t))^{\top}\varphi,
\quad
\forall \alpha=\overline{1,m}
\end{equation}
Let $t_0 \in D$ and $v\in\mathbb{R}^n$.
The Cauchy problem $\{ (\ref{II8}), \,  \varphi(t_0)=v \}$ has the solution
\begin{equation}
\label{II9}
\varphi_v(\, \cdot \, , t_0) : D \to \mathbb{R}^n,
\quad
\varphi_v(s , t_0)
=
\chi(t_0,s)^{\top}v,
\quad
\forall s \in D.
\end{equation}
Denote by $\mathcal{S}$, the set of solutions of the PDE system $(\ref{II8})$, i.e.,
\begin{equation*}
\mathcal{S}=
\left \{
\varphi : D \to \mathbb{R}^n
\Bigm |
\varphi
\mbox{ solution of the system (\ref{II8})}
\right \}
=
\left \{
\varphi_v(\, \cdot \, , t_0)
\Bigm |
v \in
\mathbb{R}^n
\right \}.
\end{equation*}
First, we remark that $\mathcal{S}$ is a real vector space.

Let $x_0 \in \mathbb{R}^n$, $t_0,t\in D$,
and $\gamma_{t_0,t}$ be a piecewise $\mathcal{C}^1$ curve, included
in $D$, which joins the multitime $t_0$ to the multitime $t$
(covered from $t_0$ to $t$). We define the functional\,
$F_{\gamma_{t_0,t}} (\, \cdot \, , x_0;t_0,t) : \mathcal{S} \to \mathbb{R}$,
\begin{equation}
\label{II10}
F_{\gamma_{t_0,t}}
(\varphi, x_0;t_0,t)
=
\int\limits_{\gamma_{t_0,t}}
\Big \|
N_{\alpha}(s)^{\top}\varphi(s)
\Big \|^2
\, \mbox{d}s^{\alpha}
-2
\langle
x_0, \varphi (t_0)
\rangle,
\quad
\forall
\varphi \in \mathcal{S},
\end{equation}
and we call it the {\it controllability $\gamma_{t_0,t}$ - functional}.

The function $L_{t_0} : \mathbb{R}^n \to \mathcal{S}$,
$L_{t_0}(v)=\displaystyle \varphi_v(\,\cdot \, , t_0)$ is obvious an isomorphism of
vector spaces. Its inverse is
$L_{t_0}^{-1} : \mathcal{S} \to \mathbb{R}^n $,
$L_{t_0}^{-1}(\varphi)=\varphi(t_0)$,
$\forall \varphi \in \mathcal{S}$.

Now define the function
\begin{equation}
\label{II11}
\widetilde{F}_{\gamma_{t_0,t}}
(\, \cdot \, , x_0;t_0,t)
: \mathbb{R}^n \to \mathbb{R},
\,\,\,\,
\widetilde{F}_{\gamma_{t_0,t}}
(v , x_0;t_0,t)
=
F_{\gamma_{t_0,t}}
(L_{t_0}(v), x_0;t_0,t).
\end{equation}
Let us write $\widetilde{F}_{\gamma_{t_0,t}}
(\, \cdot \, , x_0;t_0,t)$ by a formula showing that it is
in fact a quadratic affine form. In this sense, we have

\begin{equation}
\label{II12}
\widetilde{F}_{\gamma_{t_0,t}}
(v , x_0;t_0,t)
=
\left \langle
\mathcal{C}_{\gamma_{t_0,t}}v \, ,
v
\right \rangle
-2
\left \langle
x_0, v
\right \rangle,
\quad
\forall
v \in \mathbb{R}^n.
\end{equation}

\begin{remark}
\label{oobservatia4}
\rm
From $(\ref{II11})$ and from the fact that $L_{t_0}$ is a bijective function,
we deduce that $v$ is an extremum point (respectively minimum, maximum) for
$\widetilde{F}_{\gamma_{t_0,t}}
(\, \cdot \, , x_0;t_0,t)$ if and only if $L_{t_0}(v)=
\displaystyle \varphi_v(\,\cdot \, , t_0)$ is
an extremum point (respectively minimum, maximum) for
$F_{\gamma_{t_0,t}} (\, \cdot \, , x_0;t_0,t)$.
And conversely: $\varphi$ is an extremum point (respectively minimum, maximum) for
$F_{\gamma_{t_0,t}} (\, \cdot \, , x_0;t_0,t)$
if and only if
$L_{t_0}^{-1}(\varphi)=\varphi(t_0)$ is
an extremum point (respectively minimum, maximum) for
$\widetilde{F}_{\gamma_{t_0,t}}
(\, \cdot \, , x_0;t_0,t)$. Hence $L_{t_0}$ is a bijection between the extremum point set (respectively minimum, maximum) of
$\widetilde{F}_{\gamma_{t_0,t}}
(\, \cdot \, , x_0;t_0,t)$
and the extremum point set (respectively minimum, maximum) of
$F_{\gamma_{t_0,t}} (\, \cdot \, , x_0;t_0,t)$.
The notions of extremum (respectively minimum, maximum) are understood globally, for the time being.

In order to speak of local extremum points, we shall give a topology on $\mathcal{S}$.
Since $L_{t_0}$ is an isomorphism of vector spaces, it follows that $\mathcal{S}$
has finite dimension, namely $n$. We endow $\mathcal{S}$ with the topology induced by an arbitrary norm. There exists a norm on $\mathcal{S}$, for example,
$\| \varphi \|_{\mathcal{S}}=
 \| L_{t_0}^{-1} (\varphi) \|=
\| \varphi (t_0)\|$.
Since $\mathcal{S}$ has finite dimension, any two norms are equivalent, hence they induce the same topology.
Now, $L_{t_0}$ is a homeomorphism between $\mathbb{R}^n$ and
$\mathcal{S}$. We deduce that $L_{t_0}$ is a bijection between the set of local extremum points
(respectively local minimum, local maxim) of
$\widetilde{F}_{\gamma_{t_0,t}}
(\, \cdot \, , x_0;t_0,t)$
and the set of local extremum points
(respectively local minimum, local maxim) of
$F_{\gamma_{t_0,t}} (\, \cdot \, , x_0;t_0,t)$.
\end{remark}

\vspace{0.2 cm}
Throughout, in the sequel, we suppose that the matrices $M_{\alpha}(\cdot)$ verify
the relations $(\ref{II4})$, on the set $D$.

Let $t_0, t \in D$, with
$t_0^{\alpha} \leq t^{\alpha}$, $\forall \alpha$
and let $\gamma_{t_0,t} : [a,b] \to D$ be a piecewise $\mathcal{C}^1$ curve, which increase from $t_0$ to $t$. Then
\begin{equation}
\label{II13}
\left \langle
\mathcal{C}_{\gamma_{t_0,t}}v \, ,
v
\right \rangle
=
\int\limits_{\gamma_{t_0,t}}
\Big \|
N_{\alpha}(s)^{\top}\chi(t_0,s)^{\top}v
\Big \|^2
\, \mbox{d}s^{\alpha}
\geq 0,
\quad
\forall
v \in \mathbb{R}^n,
\end{equation}
The inequality $(\ref{II13})$ holds
since $\dot{\gamma}_{t_0,t}^{\alpha} (\tau) \geq 0$,
$\forall \tau$ (eventually, for a finite number of points, we have lateral derivative).

The matrix $\mathcal{C}_{\gamma_{t_0,t}}$ is symmetric. From now on and from $(\ref{II13})$,
we deduce that the function
$\widetilde{F}_{\gamma_{t_0,t}} (\, \cdot \, , x_0;t_0,t)$ takes the form of the functional
$F_w$, of Theorems \ref{tteorema1} and \ref{tteorema3}.
Here, $\mathcal{H}=\mathbb{R}^n$, $\dim \mathcal{H} =n < \infty$,
$T(v)=\mathcal{C}_{\gamma_{t_0,t}}v$; $T$ is linear, hence also continuous,
since $\mathcal{H}$ has
finite dimension; $T$ is self-adjoint and positive semidefinite (of $(\ref{II13})$). Also $w=x_0$.

Consequently, we can apply the Theorems \ref{tteorema1} and \ref{tteorema3},
for $\widetilde{F}_{\gamma_{t_0,t}} (\, \cdot \, , x_0;t_0,t)$.

It follows that the local extremum points of
$\widetilde{F}_{\gamma_{t_0,t}} (\, \cdot \, , x_0;t_0,t)$,
if they exist, are in fact global minimum points. According to the Remark \ref{oobservatia4},
it follows that the same thing happen also for $F_{\gamma_{t_0,t}} (\, \cdot \, , x_0;t_0,t)$.
Therefore, in the sequel we shall refer only to (global) minimum points.

From the Theorem \ref{tteorema1} and the Remark \ref{oobservatia4},
we find immediately

\begin{proposition}
\label{ppropozitia3} {\it Let $t_0, t \in D$, with
$t_0^{\alpha} \leq t^{\alpha}$, $\forall \alpha$,
let $\gamma_{t_0,t} : [a,b] \to D$ be a piecewise $\mathcal{C}^1$ curve,
which increases from $t_0$ to $t$, and let $x_0\in \mathbb{R}^n$}.
{\it Then the following statements are equivalent}:

$i)$ {\it $F_{\gamma_{t_0,t}} (\, \cdot \, , x_0;t_0,t)$
has at least one minimum point}.

$ii)$ {\it $\widetilde{F}_{\gamma_{t_0,t}} (\, \cdot \, , x_0;t_0,t)$
has at least a minimum point}.

$iii)$ $x_0 \in \mbox{\rm Im}(\mathcal{C}_{\gamma_{t_0,t}})$.
\end{proposition}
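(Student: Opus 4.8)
The plan is to establish the chain of equivalences $i) \Leftrightarrow ii) \Leftrightarrow iii)$ by leveraging the two isomorphisms that have already been set up. The equivalence $i) \Leftrightarrow ii)$ is essentially immediate from Remark \ref{oobservatia4}: the map $L_{t_0}$ is a bijection between the (global) minimum point set of $\widetilde{F}_{\gamma_{t_0,t}}(\,\cdot\,,x_0;t_0,t)$ and that of $F_{\gamma_{t_0,t}}(\,\cdot\,,x_0;t_0,t)$, by the very definition $(\ref{II11})$. In particular one of these functions has a minimum point if and only if the other does. So the real content is the equivalence $ii) \Leftrightarrow iii)$, which I would extract from Theorem \ref{tteorema1} applied to $\widetilde{F}_{\gamma_{t_0,t}}(\,\cdot\,,x_0;t_0,t)$.

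For $ii) \Leftrightarrow iii)$ I would proceed as follows. By the discussion preceding the Proposition, $\widetilde{F}_{\gamma_{t_0,t}}(\,\cdot\,,x_0;t_0,t)$ has the form $F_w(v)=\langle T(v),v\rangle - 2\langle w,v\rangle$ with $\mathcal{H}=\mathbb{R}^n$, $T(v)=\mathcal{C}_{\gamma_{t_0,t}}v$ self-adjoint and positive semidefinite (this uses $(\ref{II13})$, which in turn uses that $\gamma_{t_0,t}$ increases from $t_0$ to $t$, so the integrand signs are controlled), and $w=x_0$. Now suppose $iii)$ holds, i.e., $x_0 \in \mbox{Im}(\mathcal{C}_{\gamma_{t_0,t}})$; then there is $v_0$ with $T(v_0)=\mathcal{C}_{\gamma_{t_0,t}}v_0 = x_0 = w$, and Theorem \ref{tteorema1} $ii)$ says $v_0$ is a global minimum point of $\widetilde{F}_{\gamma_{t_0,t}}$, giving $ii)$. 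Conversely, suppose $ii)$ holds, so $\widetilde{F}_{\gamma_{t_0,t}}$ has a minimum point $v_0$; a global minimum point is in particular a local minimum point, so Theorem \ref{tteorema1} $i)$ yields $T(v_0)=w$, i.e., $\mathcal{C}_{\gamma_{t_0,t}}v_0 = x_0$, whence $x_0 \in \mbox{Im}(\mathcal{C}_{\gamma_{t_0,t}})$, which is $iii)$.

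Assembling the pieces: $iii) \Rightarrow ii)$ and $ii) \Rightarrow iii)$ give $ii) \Leftrightarrow iii)$, and $i) \Leftrightarrow ii)$ comes from Remark \ref{oobservatia4}, so all three statements are equivalent. I do not anticipate a genuine obstacle here; the Proposition is a packaging of Theorem \ref{tteorema1} and the isomorphism $L_{t_0}$, and the only point requiring a small amount of care is making sure the hypotheses of Theorem \ref{tteorema1} are met — specifically that $T=\mathcal{C}_{\gamma_{t_0,t}}$ is positive semidefinite, which is exactly where the hypothesis that $\gamma_{t_0,t}$ increases from $t_0$ to $t$ (and the consequent $(\ref{II13})$) enters. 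It is worth remarking in the proof that, by Theorem \ref{tteorema1} $iv)$ together with Remark \ref{oobservatia4}, ``minimum point'' in $i)$ and $ii)$ could equivalently be read as ``local extremum point'', so the statement is slightly stronger than it looks.
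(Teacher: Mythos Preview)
Your proposal is correct and follows exactly the approach the paper indicates: the paper simply states that the Proposition follows immediately from Theorem~\ref{tteorema1} and Remark~\ref{oobservatia4}, and your argument is precisely a fleshed-out version of that---$i)\Leftrightarrow ii)$ via the bijection $L_{t_0}$ of Remark~\ref{oobservatia4}, and $ii)\Leftrightarrow iii)$ via parts $i)$ and $ii)$ of Theorem~\ref{tteorema1} applied to the quadratic form $(\ref{II12})$, with positive semidefiniteness coming from $(\ref{II13})$.
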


From the Theorem \ref{tteorema3} and the Remark \ref{oobservatia4},
it follows immediately

\begin{proposition}
\label{ppropozitia4} {\it Let $t_0, t \in D$, with
$t_0^{\alpha} \leq t^{\alpha}$, $\forall \alpha$
and let $\gamma_{t_0,t} : [a,b] \to D$ be a piecewise $\mathcal{C}^1$ curve,
increasing from $t_0$ to $t$}. {\it Then
the following statements are equivalent}:

$i)$ {\it For any $x_0 \in \mathbb{R}^n$,
there exists a unique minimum point for the functional}
$F_{\gamma_{t_0,t}} (\, \cdot \, , x_0;t_0,t)$.

$ii)$ {\it For any $x_0 \in \mathbb{R}^n$,
there exists a unique minimum point for the functional}
$\widetilde{F}_{\gamma_{t_0,t}} (\, \cdot \, , x_0;t_0,t)$.

$iii)$ $\mbox{\rm rank}\, (\mathcal{C}_{\gamma_{t_0,t}})=n$.
\end{proposition}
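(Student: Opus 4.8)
The plan is to reduce Proposition~\ref{ppropozitia4} to the already-established Theorem~\ref{tteorema3}, exactly as Proposition~\ref{ppropozitia3} was reduced to Theorem~\ref{tteorema1}. First I would observe that, by the discussion preceding Proposition~\ref{ppropozitia3} (the inequality $(\ref{II13})$ together with the symmetry of $\mathcal{C}_{\gamma_{t_0,t}}$), the function $\widetilde{F}_{\gamma_{t_0,t}}(\,\cdot\,,x_0;t_0,t)$ on $\mathcal{H}=\mathbb{R}^n$ is precisely the quadratic affine form $F_w$ of Theorem~\ref{tteorema3} with $T(v)=\mathcal{C}_{\gamma_{t_0,t}}v$, $w=x_0$, and matrix $A=\mathcal{C}_{\gamma_{t_0,t}}$ in the standard basis; here $\dim\mathcal{H}=n<\infty$, and $T$ is linear, hence continuous, self-adjoint, and positive semidefinite.

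Next I would invoke the equivalence $(iii)\Leftrightarrow(v)$ of Theorem~\ref{tteorema3}: $\mbox{\rm rank}\,A=n$ if and only if for every $w\in\mathcal{H}$ the form $F_w$ has a unique minimum point. Translating $A=\mathcal{C}_{\gamma_{t_0,t}}$ and $F_w=\widetilde{F}_{\gamma_{t_0,t}}(\,\cdot\,,x_0;t_0,t)$, this is exactly the equivalence $ii)\Leftrightarrow iii)$ of the proposition. Then, to obtain $i)\Leftrightarrow ii)$, I would appeal to Remark~\ref{oobservatia4}: since $L_{t_0}$ is a bijection (indeed a homeomorphism) between $\mathbb{R}^n$ and $\mathcal{S}$ that carries (local or global) minimum points of $\widetilde{F}_{\gamma_{t_0,t}}(\,\cdot\,,x_0;t_0,t)$ onto those of $F_{\gamma_{t_0,t}}(\,\cdot\,,x_0;t_0,t)$, the existence of a unique minimum point of one functional for every $x_0$ is equivalent to the same statement for the other. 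Moreover, by Remark~\ref{oobservatia1} together with the Theorem~\ref{tteorema1}/Remark~\ref{oobservatia4} discussion, one need not distinguish local minimum, global minimum, or extremum points here, so the phrasing is unambiguous.

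I do not expect any real obstacle: the entire content has been front-loaded into Theorems~\ref{tteorema1} and~\ref{tteorema3} and Remark~\ref{oobservatia4}, and the proof is a dictionary translation. The only point that requires a word of care is the ``uniqueness'' clause: Theorem~\ref{tteorema3}$(v)$ already packages existence and uniqueness together, so one should quote $(v)$ (not $(iii)$ or $(iv)$ alone) to match the statement of $i)$ and $ii)$ verbatim; equivalently, one could combine $(iii)$ and $(iv)$ of Theorem~\ref{tteorema3}. I would therefore write the proof in essentially one sentence, citing Theorem~\ref{tteorema3} for $ii)\Leftrightarrow iii)$ and Remark~\ref{oobservatia4} for $i)\Leftrightarrow ii)$.

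\begin{proof}
According to $(\ref{II12})$, the relation $(\ref{II13})$, and the symmetry of $\mathcal{C}_{\gamma_{t_0,t}}$, the function $\widetilde{F}_{\gamma_{t_0,t}}(\,\cdot\,,x_0;t_0,t)$ is the quadratic affine form of Theorem \ref{tteorema3}, with $\mathcal{H}=\mathbb{R}^n$, $T(v)=\mathcal{C}_{\gamma_{t_0,t}}v$ (linear, hence continuous, self-adjoint, positive semidefinite), $A=\mathcal{C}_{\gamma_{t_0,t}}$, and $w=x_0$. By the equivalence $(v)\Leftrightarrow(ii)$ of Theorem \ref{tteorema3}, the statement ``for any $x_0\in\mathbb{R}^n$ there is a unique minimum point for $\widetilde{F}_{\gamma_{t_0,t}}(\,\cdot\,,x_0;t_0,t)$'' is equivalent to $\mbox{\rm rank}\,(\mathcal{C}_{\gamma_{t_0,t}})=n$; this is $ii)\Leftrightarrow iii)$. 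Finally, by Remark \ref{oobservatia4}, $L_{t_0}$ is a bijection between the minimum point set of $\widetilde{F}_{\gamma_{t_0,t}}(\,\cdot\,,x_0;t_0,t)$ and that of $F_{\gamma_{t_0,t}}(\,\cdot\,,x_0;t_0,t)$, so one functional has a unique minimum point for every $x_0$ if and only if the other does; this is $i)\Leftrightarrow ii)$.
\end{proof}
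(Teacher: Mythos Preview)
Your proof is correct and follows exactly the paper's approach: the paper states only that the proposition ``follows immediately'' from Theorem~\ref{tteorema3} and Remark~\ref{oobservatia4}, and your write-up simply unpacks this citation, using the $(ii)\Leftrightarrow(v)$ equivalence of Theorem~\ref{tteorema3} for $ii)\Leftrightarrow iii)$ and the bijection $L_{t_0}$ of Remark~\ref{oobservatia4} for $i)\Leftrightarrow ii)$.
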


From the Propositions \ref{ppropozitia3}, \ref{ppropozitia4}
and Theorem \ref{tteorema6}, it follows directly:

\begin{theorem}
\label{tteorema7} {\it Let us consider
the PDE system $(\ref{II3})$, with the matrix functions
$M_\alpha(\cdot)$ verifying the relations} $(\ref{II4})$.

{\it $i)$ Let $t_0,t \in D$, such that
$t_0^{\alpha} \leq t^{\alpha}$,
$\forall\alpha=\overline{1,m}$. If the phase $(t_0,x_0)$
transfers into the phase $(t,y)$, then} {\it for any piecewise $\mathcal{C}^1$ curve ${\gamma}_{t_0,t}$, included in $D$,
increasing from $t_0$ to $t$, it follows that
$
F_{\gamma_{t_0,t}} (\, \cdot \, , x_0-\chi(t_0,t)y ;t_0,t)
$
and
$
\widetilde{F}_{\gamma_{t_0,t}} (\, \cdot \, , x_0-\chi(t_0,t)y;t_0,t)
$
have global minimum points}.

$ii)$ {\it If the phase $(t_0,x_0)$ is controllable, then
$\exists t\in D$, with $t^{\alpha}> t^{\alpha}_0$,
$\forall \alpha$, such that for any piecewise $\mathcal{C}^1$ curve ${\gamma}_{t_0,t}$,
included in $D$, increasing from $t_0$ to $t$, it follows that
$
F_{\gamma_{t_0,t}} (\, \cdot \, , x_0;t_0,t)
$
and
$
\widetilde{F}_{\gamma_{t_0,t}} (\, \cdot \, , x_0;t_0,t)
$
have global minimum points}.

$iii)$ {\it Let $t_0, t \in D$, with $t_0^{\alpha}<t^{\alpha}$, $\forall \alpha$.
If the PDE system $(\ref{II3})$ is completely controllable
from the multitime $t_0$ into the multitime $t$, then
for any piecewise $\mathcal{C}^1$ curve $\gamma_{t_0,t}$, included
in $D$, increasing from $t_0$ to $t$, and
for any $x_0 \in \mathbb{R}^n$, there exists a unique (global) minimum point for}
$
F_{\gamma_{t_0,t}} (\, \cdot \, , x_0;t_0,t).
$

{\it Analogously for} $\widetilde{F}_{\gamma_{t_0,t}} (\, \cdot \, , x_0;t_0,t)$.
\end{theorem}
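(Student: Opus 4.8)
The plan is to derive Theorem~\ref{tteorema7} as a direct corollary of the machinery already assembled, with essentially no new computation. The three statements mirror exactly parts $i)$, $ii)$, $iii)$ of Theorem~\ref{tteorema6}, so the strategy is: start from the conclusion of Theorem~\ref{tteorema6} (membership of a vector in the Im--gramian space $\mathcal{W}(t_0,t)$), unwind the definition $\mathcal{W}(t_0,t)=\bigcap_{\gamma_{t_0,t}}\mbox{Im}(\mathcal{C}_{\gamma_{t_0,t}})$ to get membership in $\mbox{Im}(\mathcal{C}_{\gamma_{t_0,t}})$ for \emph{every} increasing curve, and then invoke Proposition~\ref{ppropozitia3} to translate that membership into the existence of a minimum point of the functional. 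For part $iii)$ I would instead use Proposition~\ref{ppropozitia4}, after noting (via Theorem~\ref{tteorema5}$iv)$ and Proposition~\ref{ppropozitia2}) that complete controllability forces $\mathcal{W}(t_0,t)=\mathbb{R}^n$, hence $\mbox{Im}(\mathcal{C}_{\gamma_{t_0,t}})=\mathbb{R}^n$ for every increasing $\gamma_{t_0,t}$, i.e. $\mbox{rank}(\mathcal{C}_{\gamma_{t_0,t}})=n$.

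Concretely, for $i)$: suppose the phase $(t_0,x_0)$ transfers into $(t,y)$ with $t_0^\alpha\le t^\alpha$ for all $\alpha$. By Theorem~\ref{tteorema6}$i)$ we get $x_0-\chi(t_0,t)y\in\mathcal{W}(t_0,t)$. Fix any piecewise $\mathcal{C}^1$ curve $\gamma_{t_0,t}$, included in $D$, increasing from $t_0$ to $t$; by Definition~\ref{ddefinitia55} the vector $x_0-\chi(t_0,t)y$ lies in $\mbox{Im}(\mathcal{C}_{\gamma_{t_0,t}})$. Now apply Proposition~\ref{ppropozitia3} with the data $x_0$ replaced by $x_0-\chi(t_0,t)y$: the equivalence $iii)\Leftrightarrow i)\Leftrightarrow ii)$ there yields that both $F_{\gamma_{t_0,t}}(\,\cdot\,,x_0-\chi(t_0,t)y;t_0,t)$ and $\widetilde{F}_{\gamma_{t_0,t}}(\,\cdot\,,x_0-\chi(t_0,t)y;t_0,t)$ possess a minimum point, which (by the discussion preceding Proposition~\ref{ppropozitia3}, using Theorems~\ref{tteorema1} and~\ref{tteorema3}) is automatically a global minimum point. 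Statement $ii)$ is the special case $y=0$: controllability of $(t_0,x_0)$ gives, via Theorem~\ref{tteorema6}$ii)$, some $t$ with $t^\alpha>t_0^\alpha$ and $x_0\in\mathcal{W}(t_0,t)$, and the same application of Proposition~\ref{ppropozitia3} finishes it.

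For $iii)$: let $t_0,t\in D$ with $t_0^\alpha<t^\alpha$ for all $\alpha$, and assume the system is completely controllable from $t_0$ to $t$. By Theorem~\ref{tteorema6}$iii)$ we have $\mathcal{W}(t_0,t)=\mathbb{R}^n$; since $\mathcal{W}(t_0,t)$ is the intersection of the subspaces $\mbox{Im}(\mathcal{C}_{\gamma_{t_0,t}})$ over all increasing curves, each such $\mbox{Im}(\mathcal{C}_{\gamma_{t_0,t}})$ must equal $\mathbb{R}^n$, i.e. $\mbox{rank}(\mathcal{C}_{\gamma_{t_0,t}})=n$. Now Proposition~\ref{ppropozitia4} gives the equivalence with ``for any $x_0\in\mathbb{R}^n$ there is a unique minimum point of $F_{\gamma_{t_0,t}}(\,\cdot\,,x_0;t_0,t)$'', and analogously for $\widetilde{F}_{\gamma_{t_0,t}}$, which is exactly the claim. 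I expect no real obstacle here; the only thing to be careful about is the bookkeeping of which curve the quantifiers range over — in $i)$ and $ii)$ the functionals are required to have minima for \emph{every} admissible increasing curve, and that is precisely what passing through $\mathcal{W}(t_0,t)$ (an intersection over all such curves) buys us, so one should make sure the quantifier on $\gamma_{t_0,t}$ is introduced \emph{after} invoking Theorem~\ref{tteorema6}, not before.
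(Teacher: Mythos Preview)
Your proposal is correct and matches the paper's approach exactly: the paper simply states that Theorem~\ref{tteorema7} ``follows directly'' from Propositions~\ref{ppropozitia3}, \ref{ppropozitia4} and Theorem~\ref{tteorema6}, and your write-up is precisely the unpacking of that sentence. The bookkeeping you flag about the quantifier on $\gamma_{t_0,t}$ (introduced after invoking Theorem~\ref{tteorema6}, so that the intersection defining $\mathcal{W}(t_0,t)$ yields the conclusion for every increasing curve) is the only subtlety, and you handle it correctly.
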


\section{Controllability functional}
We consider the PDE system $(\ref{II3})$,
for which the matrix functions
$M_{\alpha}(\cdot)$ and $N_{\alpha}(\cdot)$ verify
the relations $(\ref{II4})$ and $(\ref{II6})$, on the set $D$.

Let $x_0\in \mathbb{R}^n$ and $t_0,t \in D$.
Let $\gamma_{t_0,t}$ be a piecewise $\mathcal{C}^1$ curve,
included in $D$. According to $(\ref{II12})$, we have
\begin{equation*}
\widetilde{F}_{\gamma_{t_0,t}}
(v , x_0;t_0,t)
=
\left \langle
\mathcal{C}_{\gamma_{t_0,t}}v \, ,
v
\right \rangle
-2
\left \langle
x_0, v
\right \rangle,
\quad
\forall
v \in \mathbb{R}^n.
\end{equation*}
But we have seen (Proposition \ref{ppropozitia1},
Definition \ref{ddefinitia4})
that in this case, the functional $\mathcal{C}_{\gamma_{t_0,t}}$
does not depend on the curve $\gamma_{t_0,t}$, but only on the ends $t_0$, $t$. It will be denoted by
$\mathcal{C}(t_0,t)$. From here and from $(\ref{II12})$, it follows that also $\widetilde{F}_{\gamma_{t_0,t}} (\, \cdot \, , x_0;t_0,t)$
does not depend on the curve $\gamma_{t_0,t}$, but only on the ends $t_0$, $t$. We denote
\begin{equation}
\label{II14}
\widetilde{F}
(v , x_0;t_0,t)
:=
\widetilde{F}_{\gamma_{t_0,t}} (\, \cdot \, , x_0;t_0,t)
=
\left \langle
\mathcal{C}(t_0,t)v \, ,
v
\right \rangle
-2
\left \langle
x_0, v
\right \rangle,
\,\,\,
\forall
v \in \mathbb{R}^n.
\end{equation}
Since $(\ref{II11})$
\begin{equation*}
F_{\gamma_{t_0,t}} (\varphi , x_0;t_0,t)
=
\widetilde{F}_{\gamma_{t_0,t}} ( L_{t_0}^{-1}( \varphi ), x_0;t_0,t)
=
\widetilde{F} ( L_{t_0}^{-1}( \varphi ), x_0;t_0,t),
\quad
\forall
\varphi \in \mathcal{S},
\end{equation*}
it follows that
$F_{\gamma_{t_0,t}} (\, \cdot \, , x_0;t_0,t)$ depends only on the ends $t_0$, $t$, and not on the curve $\gamma_{t_0,t}$. We denote
\begin{equation*}
F (\, \cdot \, , x_0;t_0,t)
:=
F_{\gamma_{t_0,t}} (\, \cdot \, , x_0;t_0,t),
\end{equation*}
and we call it the {\it controllability functional}.

Let $t_0,t \in D$, with $t_0^{\alpha}<t^{\alpha}$, $\forall \alpha$.
From the Theorem \ref{tteorema5} and Remark \ref{oobservatiaTeza}, it follows
that if the relations (\ref{II6})
hold, then the PDE system $(\ref{II3})$ is completely controllable from $t_0$ to $t$
if and only if
$\mbox{rank}\, \mathcal{C}(t_0,t)=n$;
from the Theorem \ref{tteorema1}, Theorem \ref{tteorema3},
Remark \ref{oobservatia4},
we find immediately:

\begin{theorem}
\label{tteorema8} {\it Let us consider
the PDE system $(\ref{II3})$, with the matrix functions
$M_\alpha(\cdot)$, $N_{\alpha}(\cdot)$ verifying the
relations $(\ref{II4})$ and $(\ref{II6})$, on $D$}.

{\it $i)$ Let $t_0,t \in D$, $t_0\neq t$,
$t_0^{\alpha}\leq t^{\alpha}$,
$\forall \alpha$. Then the phase $(t_0,x_0)$
transfers to the phase $(t,y)$ if and only if\,
$F (\, \cdot \, , x_0-\chi(t_0,t)y ;t_0,t)$
has at least a minimum point}.

$ii)$ {\it The phase $(t_0,x_0)$ is controllable if and only if
$\exists t\in D$, with $t^{\alpha}> t^{\alpha}_0$, $\forall \alpha$, such that\,
$F (\, \cdot \, , x_0 ;t_0,t)$
has at least a minimum point}.

$iii)$ {\it Let $t_0,t \in D$, with $t_0^{\alpha}<t^{\alpha}$, $\forall \alpha$.
The PDE system $(\ref{II3})$ is completely controllable
from $t_0$ to $t$ if and only if for any $x_0 \in \mathbb{R}^n$, there exists a
unique global minimum point, for the functional $F (\, \cdot \, , x_0 ;t_0,t)$.}

$iv)$ {\it Let $t_0,t \in D$, with $t_0^{\alpha}<t^{\alpha}$, $\forall \alpha$.
The PDE system $(\ref{II3})$ is completely controllable
from $t_0$ to $t$ if and only if
for any $x_0 \in \mathbb{R}^n$, the functional $F (\, \cdot \, , x_0 ;t_0,t)$
has at least a minimum point.}

$v)$ {\it Let $t_0,t \in D$, with $t_0^{\alpha}<t^{\alpha}$, $\forall \alpha$.
The PDE system $(\ref{II3})$ is completely controllable
from $t_0$ to $t$ if and only if
for any $x_0 \in \mathbb{R}^n$, the functional $F(\, \cdot \, , x_0 ;t_0,t)$
has at most a minimum point.}

{\it Analogously for} $\widetilde{F} (\, \cdot \, , x_0;t_0,t)$.

\end{theorem}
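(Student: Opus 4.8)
The strategy is to assemble everything from the machinery already in place: the relations $(\ref{II6})$ guarantee (via Proposition \ref{ppropozitia1} and Remark \ref{oobservatiaTeza}) that $\mathcal{V}(t_0,t)=\mathrm{Im}(\mathcal{C}(t_0,t))=\mathcal{W}(t_0,t)$, and that $\mathcal{C}(t_0,t)$ (hence $F(\,\cdot\,,x_0;t_0,t)$ and $\widetilde F(\,\cdot\,,x_0;t_0,t)$) is genuinely path independent, so the curvilinear subscript drops out. The key translation is $(\ref{II14})$: $\widetilde F(v,x_0;t_0,t)=\langle \mathcal{C}(t_0,t)v,v\rangle-2\langle x_0,v\rangle$ is exactly the quadratic affine form $F_w$ of Theorems \ref{tteorema1} and \ref{tteorema3} with $\mathcal{H}=\mathbb{R}^n$, $T=\mathcal{C}(t_0,t)$ self-adjoint positive semidefinite by $(\ref{II13})$, and $w=x_0$. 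Via Remark \ref{oobservatia4}, every statement about minimum points of $\widetilde F$ transfers verbatim to $F$ through the isomorphism $L_{t_0}$, so it suffices to argue for $\widetilde F$.

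For part $i)$: by Theorem \ref{tteorema5}$ii)$ the phase $(t_0,x_0)$ transfers to $(t,y)$ iff $x_0-\chi(t_0,t)y\in\mathcal{V}(t_0,t)=\mathrm{Im}(\mathcal{C}(t_0,t))$; by Theorem \ref{tteorema1}$i)$--$ii)$ this last condition is equivalent to $F_w$ (with $w=x_0-\chi(t_0,t)y$) having a (global, hence local) minimum point, which is $\widetilde F(\,\cdot\,,x_0-\chi(t_0,t)y;t_0,t)$ having a minimum point. Here I must check that $t_0\neq t$ together with $t_0^\alpha\le t^\alpha$ lets me choose a genuinely increasing curve from $t_0$ to $t$ so that $(\ref{II13})$ applies — Remark \ref{oobservatia3} supplies the segment $[t_0,t]$. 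Part $ii)$ is the special case $y=0$ combined with Definition \ref{ddefinitia2}$a)$: the phase $(t_0,x_0)$ is controllable iff there is $t$ with $t^\alpha>t_0^\alpha$ and $x_0\in\mathcal{V}(t_0,t)$, which by $i)$ is iff $F(\,\cdot\,,x_0;t_0,t)$ has a minimum point.

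For parts $iii)$--$v)$: by Theorem \ref{tteorema5}$iv)$ and Remark \ref{oobservatiaTeza}, complete controllability from $t_0$ to $t$ is equivalent to $\mathcal{V}(t_0,t)=\mathbb{R}^n$, i.e. $\mathrm{rank}\,\mathcal{C}(t_0,t)=n$, i.e. $\mathcal{C}(t_0,t)$ bijective. Now invoke Theorem \ref{tteorema3}: bijectivity of $T$ is equivalent to $(iii)$ "for each $w$, $F_w$ has a unique minimum point" — giving our $iii)$; to $(iv)$ "for each $w$, $F_w$ has at least a minimum point" — giving our $iv)$; and to $(iii)$ of that theorem "for each $w$, $F_w$ has at most a minimum point" — giving our $v)$. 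The "analogously for $\widetilde F$" clause is immediate since we argued on $\widetilde F$ throughout and only pushed forward to $F$ via Remark \ref{oobservatia4}. The main obstacle — really the only place needing care — is verifying in each sub-case that the hypotheses on $t_0,t$ (namely $t_0^\alpha\le t^\alpha$ with $t_0\neq t$, or $t_0^\alpha<t^\alpha$) are exactly what is needed to (a) pick an increasing curve so $(\ref{II13})$ holds and (b) invoke the controllability characterizations of Theorem \ref{tteorema5}; everything else is a bookkeeping chain of equivalences through already-proved results.
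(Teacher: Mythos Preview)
Your proposal is correct and follows essentially the same route as the paper: the paper's ``proof'' of Theorem \ref{tteorema8} is the one-sentence assembly of Theorem \ref{tteorema5}, Remark \ref{oobservatiaTeza}, Theorems \ref{tteorema1} and \ref{tteorema3}, and Remark \ref{oobservatia4}, which is exactly the chain of equivalences you spell out. The only slip is a harmless mislabel: in your paragraph on parts $iii)$--$v)$ you cite ``$(iii)$'' of Theorem \ref{tteorema3} for the \emph{unique} minimum statement, but that is item $(v)$ of Theorem \ref{tteorema3}; item $(iii)$ there is the ``at most one'' statement you correctly invoke for part $v)$ of Theorem \ref{tteorema8}.
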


\section{An unbounded extention of \\controllability functional}

Let us give an example of a complete controllable PDE system, with the matrices
$M_{\alpha}$, $N_{\alpha}$, verifying the relations
$(\ref{II4})$ and $(\ref{II6})$.
The Theorem \ref{tteorema7} says that for $t_0$, $t$, with $t_0^{\alpha}< t^{\alpha}$,
and for any curve $\gamma_{t_0,t}$, which increases from $t_0$ to $t$,
the controllability $\gamma_{t_0,t}$ - functional, has a unique global minimum point
on the set $\mathcal{S}$.

The Formula (\ref{II10}), which defines the functional $F_{\gamma_{t_0,t}}(\, \cdot \, , x_0;t_0,t)$, has sense also on some spaces $\mathcal{S}_1$, which
contains $\mathcal{S}$, as would be the space $\mathcal{C}^r(D;\mathbb{R}^n)$ with $r\in \{ 0,1,2 \}$. This means that we can introduce a new functional
\begin{equation*}
J_{\gamma_{t_0,t}}
(\, \cdot \, , x_0;t_0,t)
: \mathcal{S}_1 \to \mathbb{R},
\end{equation*}
\begin{equation}
\label{II18}
J_{\gamma_{t_0,t}}
(\varphi, x_0;t_0,t)
=
\int\limits_{\gamma_{t_0,t}}
\Big \|
N_{\alpha}(s)^{\top}\varphi(s)
\Big \|^2
\, \mbox{d}s^{\alpha}
-2
\langle
x_0, \varphi (t_0)
\rangle,
\quad
\forall
\varphi \in \mathcal{S}_1.
\end{equation}
The functional $J_{\gamma_{t_0,t}}(\, \cdot \, , x_0;t_0,t)$ is obviously
an extension of
$F_{\gamma_{t_0,t}}(\, \cdot \, , x_0;t_0,t)$.

We select $\mathcal{S}_1=\mathcal{C}^2(D;\mathbb{R}^n)$. In the case of PDE system in our example,
we shall show that for any $x_0 \neq 0$, for any $t_0$, $t$,
with $t_0^{\alpha}< t^{\alpha}$, $\forall \alpha$, and for any
curve $\gamma_{t_0,t}$, which increases from $t_0$ to $t$, the
extention $J_{\gamma_{t_0,t}}(\, \cdot \, , x_0;t_0,t)$
is unbounded from below.
Hence the Theorems \ref{tteorema7} and \ref{tteorema8}
are no longer valid
on spaces including strictly the set $\mathcal{S}$.

Now, suppose
$
m=2,\,\,n=2,\,\,k=1,\,\, D=\mathbb{R}^2,
$
$$
M_1(s)=M_2(s)=\left(
          \begin{array}{cc}
            0 & 0 \\
            0 & 0 \\
          \end{array}
        \right),
\,\,
N_1(s)=\left(
      \begin{array}{c}
        1 \\
        0 \\
      \end{array}
    \right),
\,\,
N_2(s)=\left(
      \begin{array}{c}
        0 \\
        1 \\
      \end{array}
    \right),
\,\,
\forall
s\in \mathbb{R}^2.
$$

The relations $(\ref{II4})$ and $(\ref{II6})$ hold.
We have $\chi(s_1,s_2)=I_2$, $\forall s_1,s_2 \in \mathbb{R}^2$.

Let $t_0=(t_0^1,t_0^2)$, $t=(t^1,t^2)$, with
$t_0^1<t^1$, $t_0^2<t^2$ and $x_0=(a,b)^{\top}$.

For $v=(v_1,v_2)^{\top} \in \mathbb{R}^2
=\mathcal{M}_{2,1}(\mathbb{R})$, we find
$
\varphi_v (s,t_0)=\chi(t_0,s)^{\top}v=v$,
$
\forall  s \in \mathbb{R}^2;
$
$$
N_1^{\top}(s)\varphi_v (s,t_0)=v_1,
\quad
N_2^{\top}(s)\varphi_v (s,t_0)=v_2,
\quad
\forall  s \in \mathbb{R}^2.
$$
\begin{equation*}
F ( \, \cdot \, , x_0;t_0,t )
: \mathcal{S} \to \mathbb{R},
\quad
F \Big( \varphi_v (\, \cdot \, ,t_0), x_0;t_0,t \Big)
=
\end{equation*}
\begin{equation*}
=
(t^1-t_0^1)
\cdot
\left(
v_1 - \frac{a}{t^1-t_0^1}
\right )^2
+
(t^2-t_0^2)
\cdot
\left(
v_2 - \frac{b}{t^2-t_0^2}
\right )^2
\end{equation*}
\begin{equation*}
-
\frac{a^2}{t^1-t_0^1}
-
\frac{b^2}{t^2-t_0^2}
\geq
-
\frac{a^2}{t^1-t_0^1}
-
\frac{b^2}{t^2-t_0^2},
\end{equation*}
with equality if and only if $\displaystyle v_1 =\frac{a}{t^1-t_0^1}$ \,
and $\displaystyle v_2 =\frac{b}{t^2-t_0^2}$.

Hence, for any $t_0=(t_0^1,t_0^2)$, $t=(t^1,t^2)$, with
$t_0^1<t^1$, $t_0^2<t^2$ and any $x_0=(a,b)^{\top}$, the functional
$F ( \, \cdot \, , x_0;t_0,t )$ has a unique global minimum point,
which is $\varphi_{v_0} (\, \cdot \, ,t_0)$,
where $v_0=\displaystyle \left(
\frac{a}{t^1-t_0^1}\, , \frac{b}{t^2-t_0^2} \right)$.

From the Theorem \ref{tteorema8}, it follows that the system is completeley controllable.

We denote
$
\mathcal{S}_1=
\left \{
\varphi : \mathbb{R}^2=D \to \mathbb{R}^2
\Bigm |
\varphi
\mbox{ of class }
\mathcal{C}^{2}
\right \}.
$
Obviously $\mathcal{S} \subseteq \mathcal{S}_1$.

Let $\gamma_{t_0,t}$ be an increasing curve from $t_0$ to $t$. We select
$x_0=(a,b)^{\top} \neq (0,0)^{\top}$. Define $J_{\gamma_{t_0,t}} ( \, \cdot \, , x_0;t_0,t )$, by the
formula $(\ref{II18})$. We obtain an extension of the controllability functional
to the space $\mathcal{S}_1$.

Let $q>0$. If $a+b \neq 0$, we select $c=a+b$. If $b=-a$, we select $c=a$;
we have $c \neq 0$ since $x_0 \neq 0$. We choose
$$
\varphi_1(s^1,s^2)=
c \cdot
\sqrt{\frac{q}{1+q^2(s^1+s^2-t_0^1-t_0^2)^2}},
\quad
\forall (s^1,s^2) \in \mathbb{R}^2.
$$
If $a+b \neq 0$, we consider the function $\varphi_2(\cdot)=\varphi_1(\cdot)$.
If $b=-a$, we take the function $\varphi_2(\cdot)=-\varphi_1(\cdot)$.
We select $\varphi(\cdot)=(\varphi_1(\cdot), \varphi_2(\cdot))^{\top}$.
Obviously, $\varphi \in \mathcal{S}_1$.

In the case $a+b \neq 0$, we obtain
\begin{equation*}
J_{\gamma_{t_0,t}} ( \varphi , x_0;t_0,t )
=(a+b)^2\,
\arctan
\Big(
q (t^1+t^2-t_0^1-t_0^2)
\Big)
-2
(a+b)^2 \sqrt{q}.
\end{equation*}

In case $b=-a \neq 0$, we find
\begin{equation*}
J_{\gamma_{t_0,t}} ( \varphi , x_0;t_0,t )
=a^2\,
\arctan
\Big(
q (t^1+t^2-t_0^1-t_0^2)
\Big)
-4
a^2 \sqrt{q}.
\end{equation*}
In both cases we have
$\displaystyle
\lim_{q \to \infty} J_{\gamma_{t_0,t}} ( \varphi , x_0;t_0,t )
=-\infty$.



\center


\begin{thebibliography}{99}

\setlength{\baselineskip}{.45cm}

\bibitem{1} R.\ M.\ Bianchini,  G. Stefani:
{\it Controllability along a trajectory: a variational approach},
SIAM J.\ Control Optim., {\bf 31}(1993), No.\ 4, 900-927.

\bibitem{2} C.\ Ghiu: {\it Popov-Belevich-Hautus theorem for linear
multitime autonomous dynamical systems},
U.P.B.\ Scientific Bulletin, Series A, {\bf 72}(2010), No.\ 4, 93-106.

\bibitem{3} C.\ Ghiu: {\it Controlabilitatea sistemelor liniare
de EDP multitemporale}, PhD Thesis, Bucharest, 2012.

\bibitem{4} S.\ Pickenhain, M.\ Wagner: {\it Pontryagin
principle for state-constrained control problems
governed by a first-order PDE system},
Journal of Optimization Theory and Applications,
{\bf 107}(2000), No.\ 2, 297-330.

\bibitem{5} S.\ Pickenhain, M.\ Wagner: {\it Piecewise
continuous controls in Dieudonn\'{e}-Rachevski type
problems}, Journal of Optimization Theory
and Applications, {\bf 127}(2005), No.\ 1, 145-163.


\bibitem{6} V.\ Prepeli\c t\u a: {\it Criteria of reachability
for 2D continous-discrete systems}, Rev.\ Roumaine Math.\ Pures
Appl., {\bf 48}(2003), No.\ 1, 81-93.

\bibitem{7} V.\ Prepeli\c t\u a: {\it Minimal realization algorithm
for (q,r)-D hybrid systems}, WSEAS Transactions on
Systems, {\bf 8}(2009), No.\ 1, 22-33.

\bibitem{8} C.\ Udri\c ste: {\it Controllability and observability
of multitime linear PDE systems}, Proceedings of
The Sixth Congress of Romanian Mathematicians, Bucharest,
Romania, June 28 - July 4, {\bf 1}(2007), 313-319.

\bibitem{9} C.\ Udri\c ste: {\it Multitime controllability,
observability and bang-bang principle}, Journal of Optimization
Theory and Applications, {\bf 139}(2008), No.\ 1, 141-157.

\bibitem{10} C.\ Udri\c ste: {\it Simplified multitime maximum principle},
Balkan J. Geom. Appl. {\bf 14}(2009), No.\ 1, 102-119.

\bibitem{11} C.\ Udri\c ste: {\it Nonholonomic approach of
multitime maximum principle}, Balkan J. Geom. Appl. {\bf 14}(2009), No.\ 2, 111-126.

\bibitem{12} C.\ Udri\c ste, I.\ \c Tevy: {\it Multitime
linear-quadratic regulator problem based on curvilinear integral},
Balkan J. Geom. Appl. {\bf 14}(2009), No.\ 2, 127-137.

\bibitem{13} C.\ Udri\c ste, I.\ \c Tevy: {\it Multitime
dynamic programming for curvilinear integral actions}, Journal of
Optimization Theory and Applications, {\bf 146}(2010), 189-207.

\bibitem{14} C.\ Udri\c ste: {\it Equivalence of multitime
optimal control problems}, Balkan J. Geom. Appl. {\bf 15}(2010), No.\ 1, 155-162.


\bibitem{15} { C. Udri\c ste, I. \c{T}evy}, {\it Multitime dynamic programming for multiple integral actions}, J. Glob. Optim. {\bf 51}(2011), No. 2, 345-360.  

\bibitem{16} { C. Udri\c ste}, {\it Multitime Maximum Principle for Curvilinear Integral Cost}, 
Balkan J. Geom. Appl. {\bf 16}(2011), No. 1, 128-149.

\bibitem{17} { C. Udri\c ste, A. Bejenaru}, {\it Multitime optimal control with area integral costs on boundary}, 
Balkan J. Geom. Appl. {\bf 16}(2011), No. 2, 138-154.

\bibitem{18} C. Udri\c ste, {\it Multitime maximum principle approach of
minimal submanifolds and harmonic maps}, arXiv:1110.4745v1 [math.DG] 21 Oct 2011.


\bibitem{19} M.\ Wagner: {\it Pontryagin's maximum
principle for Dieudonn\'{e}-Rashevsky type problems
involving Lipschitz functions}, Optimization, {\bf 46}(1999), 165-184.

\bibitem{20} E.\ Zuazua: Controllability and observability of partial differential equations:
Some results and open problems,
in {\it Handbook of differential equations: evolutionary differential equations}, volume 3
(C.\ M.\ Dafermos and E. Feireisl (Eds)), Elsevier/North-Holland, Amsterdam, 2006, pp. 527-621.


\end{thebibliography}
\end{document}